\begin{document}


\newcounter{n}
\setcounter{n}{0}
\newcounter{t}
\setcounter{t}{1}
\newcounter{d}
\setcounter{d}{2}
\newcounter{dt}
\setcounter{dt}{3}
\newcounter{nn}
\setcounter{nn}{-1}

\def\nodesize{3pt}


\def\ds{\displaystyle}
\def\arr{{\longrightarrow}}
\def\colim{\mathop{\rm colim}\nolimits}
\def\perm{\mathop{\rm perm}\nolimits}
\def\uperm{\mathop{\rm u\textnormal{-}perm}\nolimits}
\def\udet{\mathop{\rm u\textnormal{-}det}\nolimits}
\def\perfmatch{\mathop{\rm PerfMatch}\nolimits}
\def\pfaff{\mathop{\rm Pfaffian}\nolimits}
\def\sgn{\mathop{\rm sgn}\nolimits}

\def\intertitle#1{

\medskip

{\em \noindent #1}

\smallskip
}

\newtheorem{thm}{Theorem}[section]
\newtheorem{theorem}[thm]{Theorem}
\newtheorem{lemma}[thm]{Lemma}
\newtheorem{corollary}[thm]{Corollary}
\newtheorem{proposition}[thm]{Proposition}
\newtheorem{example}[thm]{Example}

\theoremstyle{definition}
\newtheorem{definition}[thm]{Definition}
\newtheorem{point}[thm]{}
\newtheorem{exercise}[thm]{Exercise}
\newtheorem{remark}[thm]{Remark}

\makeatletter
\let\c@equation\c@thm
\makeatother
\numberwithin{equation}{section}



\newcommand{\comment}[1]{}


\title{Undirected determinant, permanent and their complexity}
\author{Diana Dziewa-Dawidczyk, Adam J. Prze\'zdziecki}

\address{Diana Dziewa-Dawidczyk, Warsaw University of Life Sciences---SGGW, Warsaw, Poland}
\email{diana\_dziewa\_dawidczyk@sggw.edu.pl}
\address{Adam Prze\'zdziecki, Warsaw University of Life Sciences---SGGW, Warsaw, Poland}
\email{adamp@mimuw.edu.pl}

\maketitle
\begin{center}
\today
\end{center}

{\bf Abstract.} We view the determinant and permanent as functions on directed weighted graphs and introduce their analogues for the undirected graphs. We prove that the task of computing the undirected determinants as well as permanents for planar graphs, whose vertices have degree at most $4$, is \#P-complete. In the case of planar graphs whose vertices have degree at most $3$, the computation of the undirected determinant remains \#P-complete while the permanent can be reduced to the FKT algorithm, and therefore is polynomial.

The undirected permanent is a Holant problem and its complexity can be deduced from the existing literature. The concept of the undirected determinant is new. Its introduction is motivated by the formal resemblance to the directed determinant, a property that may inspire generalizations of some of the many algorithms which compute the latter.

For a sizable class of planar $3$-regular graphs, we are able to compute the undirected determinant in polynomial time.

\vspace{7pt}
{\bf Mathematics Subject Classification.} 05A15, 05C10, 05C70.

\vspace{7pt}
{\bf Keywords.} computational complexity, enumerative combinatorics, planar graphs, determinant, permanent, Pfaffian orientation.

\vspace{15pt}

\section{Introduction}
\label{section-introduction}

The most elegant definition of the {\em determinant} of an $n\times n$ matrix $A$ is probably the following one
\begin{equation}\label{equation-def-det}
  \det A = (-1)^n\sum_{c\in cc(A)}(-1)^{|c|}w(c)
\end{equation}
and the corresponding definition of the {\em permanent}
\begin{equation}\label{equation-def-perm}
  \perm A = \sum_{c\in cc(A)}w(c)
\end{equation}
The matrix $A$ above is viewed as an adjacency matrix of some weighted directed graph on $n$ vertices, denoted by the same letter $A$. The $cc(A)$ denotes the set of cycle covers of $A$. A {\em cycle cover} $c\in cc(A)$ is a subgraph which contains all the vertices of $A$ and every vertex of $c$ has both in-degree and out-degree equal 1. The symbol $|c|$ denotes the number of connected components of $c$. The symbol $w(c)$ is the weight of $c$, that is, the product of the weights of all the edges in $c$.

We see that the definitions above make sense when $A$ is an undirected graph. Thus we define the {\em undirected determinant} and the {\em undirected permanent} using the formulas \ref{equation-def-det} and \ref{equation-def-perm}, respectively, applied to an undirected graph $A$. In the case of the undirected graph, the cycle covers are also called {\em $2$-factors}, thus the undirected permanent counts the weighted $2$-factors of a graph and the undirected determinant counts them with the sign depending on the parity of the number of components.

{\bf We prove in this paper} that the task of computing the undirected determinants as well as permanents for planar graphs, whose vertices have degree at most $4$, is \#P-complete, see Theorem \ref{theorem-degree-4}. In the case of planar graphs whose vertices have degree at most $3$, the computation of the undirected permanent can be reduced, see Section \ref{section-permanent-3}, to the Fisher-Kasteleyn-Temperley (FKT) algorithm \cite{temperley-fisher,kasteleyn-a,kasteleyn-b}, and therefore is computable in polynomial time.
This result contrasts with the next one, Theorem \ref{theorem-degree-3}, which establishes the \#P-completeness of the undirected determinant of the $3$-regular planar graphs. However, for a sizable class of planar $3$-regular graphs (actually, we barely peek beyond the bipartite graphs) we are able to compute the undirected determinant in polynomial time, see Theorem \ref{theorem-undirected-det-computable}.

The undirected permanent is an instance of a symmetric Holant problem and its complexity can be deduced from the existing literature, see Cai, Fu, Guo and Williams \cite{holant-dichotomy}. Nevertheless, we prove the results for the permanent alongside with those for the determinant, as while adding little extra work, it allows to emphasize the strain between the polynomial and the \#P-complete.

The concept of the undirected determinant is new.
The alterations of the definition of the determinant have a long history.
The non-commutative generalization of determinant, called the Cayley determinant, is \#P-complete even over the ring of $2\times 2$ matrices, see \cite{CHSS11}. The Fermionants and the immanants also tend to be \#P-complete, see \cite{salvation,fermionant-immanant}. It would seem that almost any modification of the definition of determinant leads to a \#P-complete polynomial.

We were guided by the intuition that the main reason that causes these alterations to be \#P-complete is their deprivation of some of the very special properties enjoyed by the determinant.
More specifically, we suspect that the key limitations of the FKT algorithm stem from its direct dependence on the determinant. Linear properties of the determinant cause the FKT algorithm to count only those structures which allow some kind of ``interpolation''. To illustrate what we have in mind, let us recall that Valiant introduced matchgates \cite{valiant-matchgates,valiant-matchgates-quantum}. Then, the characterization of the possible signatures of planar matchgates has been completed by Cai and Gorenstein \cite{matchgates-revisited} in terms of Matchgate Identities. These identities imply that for any matchgate $G$, if its signature $\Gamma_G$ is non-zero on two length-$k$ bitstrings $\alpha,\beta\in\{0,1\}^k$ then there exists a sequence
$$
  \alpha=\alpha_0,\alpha_1,\ldots,\alpha_s=\beta
$$
in $\{0,1\}^k$ such that $\alpha_{i-1}$ differs from $\alpha_i$, at exactly two places, for $i=1,2,\ldots,s$ and $\Gamma_G^{\alpha_i}\neq 0$ for $i=0,1,\ldots,s$. This contrasts with the behavior of Boolean formulas: the information that two assignments $(x_1,x_2,\ldots,x_n)$ and $(y_1,y_2,\ldots,y_n)$ satisfy a formula $\phi$ does not imply that any other assignment will satisfy $\phi$.

The reach of the FKT algorithm as been extended by the holographic reductions, introduced by Valiant \cite{valiant-accidental,valiant-holographic}. These, at least in large part, are a way to incorporate the linear properties of the determinant into the formulation of the combinatorial problems.

The authors have attempted to avoid at least some limitations, related to the linear properties of the determinant, by altering one of the many algorithms for determinant rather than its definition. The most promising candidate seemed to be, probably the most combinatorial one, known as the Mahajan-Vinay (MV) algorithm \cite{mahajan-vinay}. Rote \cite{rote} discussed MV and a number of related, but less combinatorial in nature, algorithms.

While the MV algorithm received a lot of attention in the contexts of algebraic branching programs or circuits, its other applications seem to be unexplored. For example an observation that the MV algorithm computes the non-commutative Moore determinant of the self-adjoint quaternionic matrices, has not yet been published. See \cite{dyson} for a discussion of the Moore determinant. The authors were inspired by an observation that a slight modification of the MV algorithm computes the undirected permanent of planar graphs whose vertices have degree at most $3$. In Section \ref{section-permanent-3}, we compute the same permanents using the FKT algorithm. This led us to the investigation of the undirected determinant, which, as we mentioned above, turned out to be \#P-complete. While the alteration of signs in the directed determinant enables cancellations that make it possible to construct polynomial algorithms for determinant, the alteration of signs in the undirected determinant might suggest some amendments to the algorithms for determinant. To some extent we follow in this direction in Section \ref{section-semi-pfaffian}.

The MV algorithm is mentioned only in the introduction, above, as the source of our inspiration. It is not present in the final writing of this paper -- we simplified the arguments by expressing them in the language of the FKT algorithm. However, the dependence of our algorithms on the linear properties of determinant is slightly weakened by applying the Pfaffian to a matrix of inverses, see Section \ref{section-permanent-3} and Theorem \ref{theorem-undirected-det-computable}.

\section{Notation and preliminaries}

Most of our terminology is standard and follows for example Thomas \cite{thomas}. All graphs are finite, undirected, althought, in Section \ref{section-semi-pfaffian} we consider undirected graphs whose edges are equipped with an orientation. The graphs may have loops or multiple edges, however, each case when they may occur is mentioned explicitly in the text. In each such case either we explicitly show how to modify our graphs to remove loops and multiple edges, or these are used only as parts of a proof.

The symbols $V(G)$ and $E(G)$ denote the sets of vertices and edges of the graph $G$. We write $v\in G$ or $e\in G$ instead of $v\in V(G)$ or $e\in E(G)$ when no confusion can arise. Elements of $E(G)$ are denoted $\{v_1,v_2\}$, or $v_1v_2$ if we want to indicate that the edge is oriented from $v_1$ to $v_2$. The graph $G$ is {\em weighted}, which means that it is equipped with the weight function $w:E(G)\arr F$. The reader may view the $F$ as the rational numbers, we are going to use only the weights $1$, $-1$ and $-{1\over 2}$, however, in the constructions, $F$ may be any field of characteristics not equal $2$.

A {\em cycle cover} of $G$, denoted $cc(G)$, is a spanning subgraph whose all vertices have degree $2$. In the literature, the cycle covers are also called vertex cycle covers or $2$-factors. The weight of a cycle cover $c\in cc(G)$ is the product of the weights of its edges: $w(c)=\prod_{e\in c}w(e)$.

The {\em undirected determinant} and the {\em undirected permanent} are defined as
\begin{equation}\label{equation-def-det}
  \udet G = (-1)^n\sum_{c\in cc(G)}(-1)^{|c|}w(c)
\end{equation}
and
\begin{equation}\label{equation-def-perm}
  \uperm G = \sum_{c\in cc(G)}w(c)
\end{equation}
where $n$ is the number of vertices in $G$. These definitions make sense also in the case of graphs with loops and multiple edges.

A {\em gadget} is a graph $G$ equipped with a set of {\em external edges} $ext(G)$ such that each $e\in ext(G)$ is adjacent to exactly one vertex in $G$. When referring to gadgets, we use the term {\em cycle} to mean either the actual cycle or a path connecting two external edges. Each cycle cover $c\in cc(G)$ of a gadget $G$ determines a, possibly empty, subset of $ext(G)$ consisting of those edges in $ext(G)$ which belong to $c$.

A {\em signature} of a gadget $G$ is a function
$$\mathop{\rm signature}:\mathcal{P}(ext(G))\to F$$
where $\mathcal{P}(ext(G))$ denotes the set of all subsets of $ext(G)$. The {\em permanental signature} of a gadget $G$ is defined as
$$
  \mathop{\rm signature}(S)=\sum_{\substack{c\in cc(G) \\ c\mskip 1mu \cap ext(G)=S}}w(c)
$$
while the {\em determinantal signature} is
$$
  \mathop{\rm signature}(S)=(-1)^n\sum_{\substack{c\in cc(G) \\ c\mskip 1mu \cap ext(G)=S}}(-1)^{|c|+\tau}w(c)
$$
where $n$ is the number of vertices of $G$ and $|c|$ is the number of components of $c$. The number $\tau$ is $0$ if $|S|<4$. For larger subsets $S$ the number $\tau$ is defined relative to a fixed pairing $P_0$ of the elements of $S$. For each cycle cover $c\in cc(G)$ those components that are not cycles, connect a pair of elements of $S$ which yields another pairing $P_c$. If $P_c=P_0$ we define $\tau=0$. Otherwise we can connect $P_0$ to $P_c$ by a sequence of modifications, where each of them involves two pairs. These are of the form: $\{\{a,b\},\{c,d\}\}\mapsto\{\{a,d\},\{c,b\}\}$.
For every such modification we change $\tau$ to $1-\tau$.
In the simplest, and actually the only one which is important here, case when $S=ext(G)=\{a,b,c,d\}$ this can be illustrated as in {\bf Figure \ref{figure-pairing}} below.
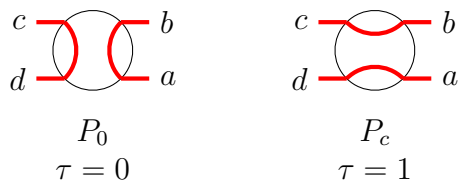
\begin{figure}[H]
\begin{tikzpicture}[scale=0.75]
  \begin{scope}[shift={(0,0)}]
    \draw (0,1) arc (135:495:0.7071);
    \draw[red, ultra thick] (0,1) arc (45:-45:0.7071);
    \draw[red, ultra thick] (1,1) arc (135:225:0.7071);

    \draw[red, ultra thick] (0,1)--++(-0.5,0);
    \draw[red, ultra thick] (0,0)--++(-0.5,0);
    \draw[red, ultra thick] (1,1)--++(0.5,0);
    \draw[red, ultra thick] (1,0)--++(0.5,0);

    \draw (1.5,0) node [anchor=west] {$a$};
    \draw (1.5,1) node [anchor=west] {$b$};
    \draw (-0.5,1) node [anchor=east] {$c$};
    \draw (-0.5,0) node [anchor=east] {$d$};

    \node at (0.5,-0.95) {$P_0$};
    \node at (0.5,-1.6) {$\tau=0$};
  \end{scope}

  \begin{scope}[shift={(5,0)}]
    \draw (0,1) arc (135:495:0.7071);
    \draw[red, ultra thick] (1,1) arc (-45:-135:0.7071);
    \draw[red, ultra thick] (0,0) arc (135:45:0.7071);

    \draw[red, ultra thick] (0,1)--++(-0.5,0);
    \draw[red, ultra thick] (0,0)--++(-0.5,0);
    \draw[red, ultra thick] (1,1)--++(0.5,0);
    \draw[red, ultra thick] (1,0)--++(0.5,0);

    \draw (1.5,0) node [anchor=west] {$a$};
    \draw (1.5,1) node [anchor=west] {$b$};
    \draw (-0.5,1) node [anchor=east] {$c$};
    \draw (-0.5,0) node [anchor=east] {$d$};

    \node at (0.5,-0.95) {$P_c$};
    \node at (0.5,-1.6) {$\tau=1$};
  \end{scope}

\end{tikzpicture}
\caption{Modification of a pairing of external edges inside of a gadget.}
\label{figure-pairing}
\end{figure}

Suppose that a gadget $G$ is a part of an abient graph and $c$ is a cycle cover of the graph. Each modification of $P_{c\cap G}$ changes the parity of the number of those cycles in $c$ which are not entirely inside of $G$. This way we see that $\tau$ is well defined. Also, when computing the undirected determinant of the ambient graph, we may ignore the inner structure of $G$ and use only its signature and assume that the cycle traverses $G$ in the way indicated by $P_0$.

By abuse of terminology, we often write ``signature'' when we mean the value of the signature at a specific subset. When we list the values of the signature we omit subsets at which the signature is zero.

\section{The undirected determinant and permanent of planar graphs of maximum degree $4$ are \#P-complete}
\label{section-degree-4}

In this section, for a given Boolean formula $\phi(x_1,x_2,\ldots,x_n)$, in conjunctive normal form with $m$ clauses, where each clause is limited to three literals, we construct an undirected weighted planar graphs $A_\phi$ and $B_\phi$, of maximum degree $4$, such that:
\begin{itemize}
  \item the number of assignments satisfying $\phi$,
  \item $(-1)^m$ times the undirected determinant of $A_\phi$,
  \item the undirected permanent of $B_\phi$
\end{itemize}
are all equal. The graphs $A_\phi$ and $B_\phi$ differ only by signs of weights of some edges -- thus it is convenient to describe them in parallel. The size of these graphs is bounded by $O(m^2)$.

We construct these graphs by means of the following gadgets:
\begin{enumerate}
  \item The skew crossover gadget.
  \item The iff gadget -- it synchronizes two edges which belong to the boundary of a common face of a planar graph. Connecting the two edges with an iff gadget results in a new graph whose permanent (respectively determinant) counts the weights of only those cycle covers, of the original graph, which contain either both or none of the edges connected by the gadget.
  \item The extended iff gadget -- synchronizes any two edges of a planar graph.
  \item The variable setting gadget -- encodes the variables of the formula $\phi$.
  \item The clause gadget -- encodes the clauses of the formula $\phi$.
\end{enumerate}

{\bf (1) The skew crossover gadget.} We start with a skew crossover gadget, shown in {\bf Figure \ref{figure-skew-crossover}}. It is inspired by the Cai-Gorenstein \cite{matchgates-revisited} construction. Its signature is $0$ unless the opposite edges, either both or none, belong to a cycle cover. The nonzero signatures are either $1$ or $-1$.
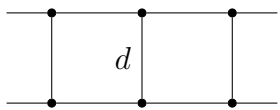
\begin{figure}[H]
\def\a{0.2}
\begin{tikzpicture}[scale=1.2]
  \tikzstyle{vertex}=[circle,minimum size=\nodesize,inner sep=0pt,draw=black,fill]

  \foreach \name/\x/\y in {a/-1/2,b/0/2,c/1/2,d/-1/1,e/0/1,f/1/1}
    \node[vertex] (\name) at (\x,\y) {};
  \foreach \from/\to in {a/b,b/c,d/e,e/f,a/d,b/e,c/f}
    \draw (\from) -- (\to);
  \draw (a) -- ++(-0.5,0);
  \draw (c) -- ++(0.5,0);
  \draw (d) -- ++(-0.5,0);
  \draw (f) -- ++(0.5,0);
  \draw (0,1.5) node [anchor=east] {$d$};
\end{tikzpicture}
\caption{The skew crossover gadget}
\label{figure-skew-crossover}
\end{figure}
\noindent The weight $d$ is defined as $d=1$ for determinant and $d=-1$ for permanent.

In {\bf Figure \ref{figure-skew-crossover-signature}}, we list all the possible ways a cycle cover may meet this gadget, and the ways they add to yield the signature of this gadget, in the permanental case.

\def\s{1}
\def\ext{0.2}
\newcommand\xgadget[9]{ 
  \draw (#1+1,#2+0.5) node {x};
  \draw (#1-\ext,#2-\ext) -- ++(2*\s+2*\ext,0) -- ++(0,\s+2*\ext) -- ++(-2*\s-2*\ext,0) -- ++(0,-\s-2*\ext);
  \draw\ifnum\value{#3}=1[red, ultra thick]\fi
    (#1-\ext,#2+1) -- ++(-0.5,0);
  \draw\ifnum\value{#4}=1[red, ultra thick]\fi
    (#1+2+\ext,#2+1) -- ++(0.5,0);
  \draw\ifnum\value{#5}=1[red, ultra thick]\fi
    (#1-\ext,#2) -- ++(-0.5,0);
  \draw\ifnum\value{#6}=1[red, ultra thick]\fi
    (#1+2+\ext,#2) -- ++(0.5,0);
  \ifnum#7=1
    \draw[-] (#1-\ext,#2+1) arc (90:-90:0.5);
    \draw[-] (#1+2+\ext,#2+1) arc (90:270:0.5);
  \fi
  \ifnum#7=2
    \draw[-,red, ultra thick] (#1-\ext,#2+1) arc (90:-90:0.5);
    \draw[-,red, ultra thick] (#1+2+\ext,#2+1) arc (90:270:0.5);
  \fi
  \ifnum#7=3
    \draw[-] (#1-\ext,#2+1) -- ++(2.5,0);
    \draw[-] (#1-\ext,#2) -- ++(2.5,0);
  \fi
  \ifnum#7=4
    \draw[-,red, ultra thick] (#1-\ext,#2+1) -- ++(2.5,0);
    \draw[-,red, ultra thick] (#1-\ext,#2) -- ++(2.5,0);
  \fi

  \draw (#1+1,#2-0.2) node [anchor=north] {#8};
  \draw (#1+3,#2+0.5) node {#9};
}

\newcommand\bgadget[9]{ 
  \tikzstyle{vertex}=[circle,minimum size=\nodesize,inner sep=0pt,draw=black,fill]

  \foreach \name/\x/\y in {a/0/1,b/1/1,c/2/1,d/0/0,e/1/0,f/2/0}
    \node[vertex] (\name) at (#1+\x,#2+\y) {};

  \draw\ifnum\value{#3}=1[red, ultra thick]\fi (a) -- ++(-0.5,0);
  \draw\ifnum\value{#4}=1[red, ultra thick]\fi (a) -- (b);
  \draw\ifnum\value{#5}=1[red, ultra thick]\fi (b) -- (c);
  \draw\ifnum\value{#6}=1[red, ultra thick]\fi (c) -- ++(0.5,0);
  \draw\ifnum\value{#7}=1[red, ultra thick]\fi (a) -- (d);
  \draw\ifnum\value{#8}=1[red, ultra thick]\fi (b) -- (e);
  \draw\ifnum\value{#9}=1[red, ultra thick]\fi (c) -- (f);
}

\newcommand\cgadget[8]{
  \tikzstyle{vertex}=[circle,minimum size=\nodesize,inner sep=0pt,draw=black,fill]

  \foreach \name/\x/\y in {a/0/1,b/1/1,c/2/1,d/0/0,e/1/0,f/2/0}
    \node[vertex] (\name) at (#1+\x,#2+\y) {};

  \draw\ifnum\value{#3}=1[red, ultra thick]\fi (d) -- ++(-0.5,0);
  \draw\ifnum\value{#4}=1[red, ultra thick]\fi (d) -- (e);
  \draw\ifnum\value{#5}=1[red, ultra thick]\fi (e) -- (f);
  \draw\ifnum\value{#6}=1[red, ultra thick]\fi (f) -- ++(0.5,0);

  \draw (#1+1,#2-0.1) node [anchor=north] {#7};
  \draw (#1+3,#2+0.5) node {#8};
}

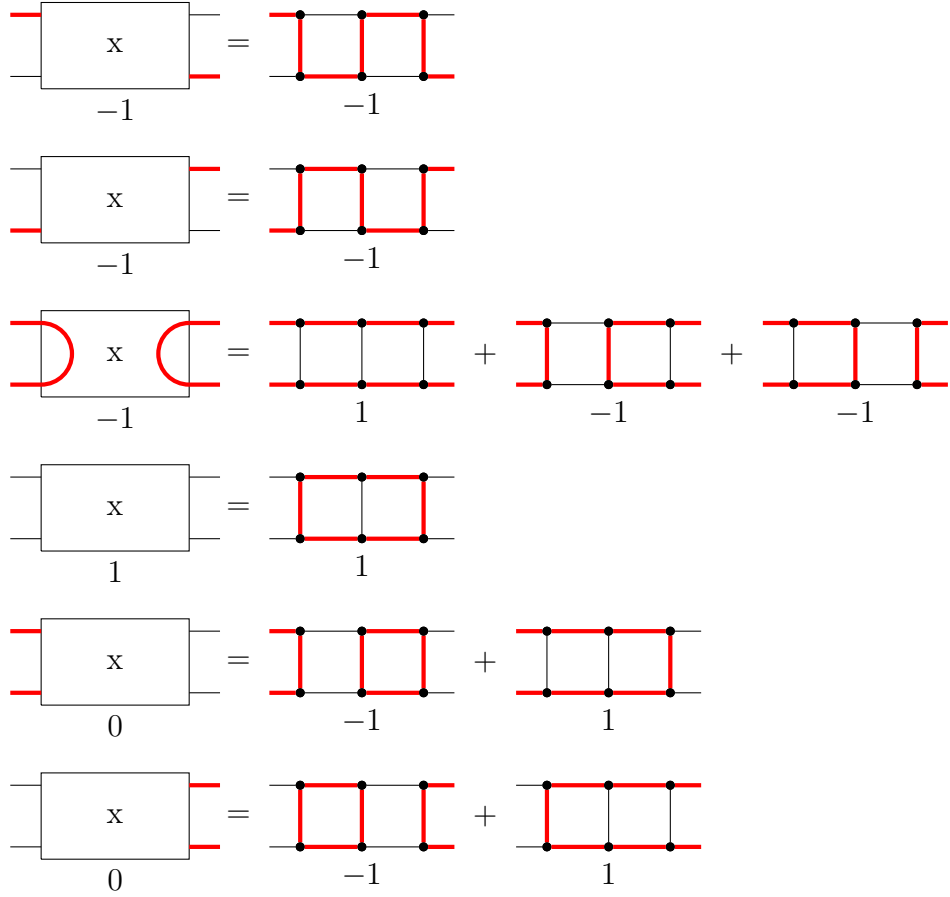
\begin{figure}[H]
\begin{tikzpicture}[scale=0.82]
  \def\yy{0}
  \xgadget{0}{\yy}{t}{n}{n}{t}{0}{$-1$}{$=$}
  \bgadget{4}{\yy}{t}{n}{t}{n}{t}{t}{t}
  \cgadget{4}{\yy}{n}{t}{n}{t}{$-1$}{}

  \def\yy{-2.5}
  \xgadget{0}{\yy}{n}{t}{t}{n}{0}{$-1$}{$=$}
  \bgadget{4}{\yy}{n}{t}{n}{t}{t}{t}{t}
  \cgadget{4}{\yy}{t}{n}{t}{n}{$-1$}{}

  \def\yy{-5}
  \xgadget{0}{\yy}{t}{t}{t}{t}{2}{$-1$}{$=$}
  \bgadget{4}{\yy}{t}{t}{t}{t}{n}{n}{n}
  \cgadget{4}{\yy}{t}{t}{t}{t}{$1$}{$+$}
  \bgadget{8}{\yy}{t}{n}{t}{t}{t}{t}{n}
  \cgadget{8}{\yy}{t}{n}{t}{t}{$-1$}{$+$}
  \bgadget{12}{\yy}{t}{t}{n}{t}{n}{t}{t}
  \cgadget{12}{\yy}{t}{t}{n}{t}{$-1$}{}

  \def\yy{-7.5}
  \xgadget{0}{\yy}{n}{n}{n}{n}{0}{$1$}{$=$}
  \bgadget{4}{\yy}{n}{t}{t}{n}{t}{n}{t}
  \cgadget{4}{\yy}{n}{t}{t}{n}{$1$}{}

  \def\yy{-10}
  \xgadget{0}{\yy}{t}{n}{t}{n}{0}{$0$}{$=$}
  \bgadget{4}{\yy}{t}{n}{t}{n}{t}{t}{t}
  \cgadget{4}{\yy}{t}{n}{t}{n}{$-1$}{$+$}
  \bgadget{8}{\yy}{t}{t}{t}{n}{n}{n}{t}
  \cgadget{8}{\yy}{t}{t}{t}{n}{$1$}{}

  \def\yy{-12.5}
  \xgadget{0}{\yy}{n}{t}{n}{t}{0}{$0$}{$=$}
  \bgadget{4}{\yy}{n}{t}{n}{t}{t}{t}{t}
  \cgadget{4}{\yy}{n}{t}{n}{t}{$-1$}{$+$}
  \bgadget{8}{\yy}{n}{t}{t}{t}{t}{n}{n}
  \cgadget{8}{\yy}{n}{t}{t}{t}{$1$}{}
\end{tikzpicture}
\caption{The signature of the skew crossover gadget in the case of permanent, the determinantal signature has the opposite sign}
\label{figure-skew-crossover-signature}
\end{figure}

In the determinantal case, all values of the signature of this gadget have the opposite sign, however, the third one requires an additional comment. This is the only instance, where the parity of the number of components of the cycle cover depends on the way it meets the gadget.
The arcs inside the box indicate the way the cycles should traverse this gadget, for the sake of counting the parity of the number of cycles in a cover.

{\bf (2) The iff gadget}, shown in {\bf Figure \ref{figure-iff}}, has $4$ external edges and its signature is $1$ if either all or none of them belong to a cycle cover. Otherwise its signature is $0$. All other cycle covers cancel out. The gadget is used to synchronize two edges which belong to the boundary of a common face of the planar graph.

The iff gadget can be constructed, using the skew crossover gadget, as shown in {\bf Figure \ref{figure-iff}}, below.

\def\s{1}
\def\ext{0.2}
\newcommand\iffgadget[5]{ 
  \tikzstyle{vertex}=[circle,minimum size=\nodesize,inner sep=0pt,draw=black,fill]

  \node[vertex] at (#1,#2+0.5*\s) {};
  \node[vertex] at (#1+2*\s,#2+0.5*\s) {};

  \draw (#1+\s,#2+0.5*\s) node {iff};
  \draw (#1+0.5*\s,#2) -- ++(-0.5*\s,0.5*\s) -- ++(0.5*\s,0.5*\s)
    -- ++(\s,0) -- ++(0.5*\s,-0.5*\s) -- ++(-0.5*\s,-0.5*\s) -- ++(-\s,0);
  \draw\ifnum\value{#3}=1[red, ultra thick]\fi
    (#1,#2+0.5*\s) -- ++(-0.25*\s,0.25*\s);
  \draw\ifnum\value{#3}=1[red, ultra thick]\fi
    (#1,#2+0.5*\s) -- ++(-0.25*\s,-0.25*\s);
  \draw\ifnum\value{#3}=1[red, ultra thick]\fi
    (#1+2*\s,#2+0.5*\s) -- ++(0.25*\s,0.25*\s);
  \draw\ifnum\value{#3}=1[red, ultra thick]\fi
    (#1+2*\s,#2+0.5*\s) -- ++(0.25*\s,-0.25*\s);

  \draw (#1+1,#2-0.2) node [anchor=north] {#4};
  \draw (#1+3,#2+0.5) node {#5};
}

\def\xx{5.5}

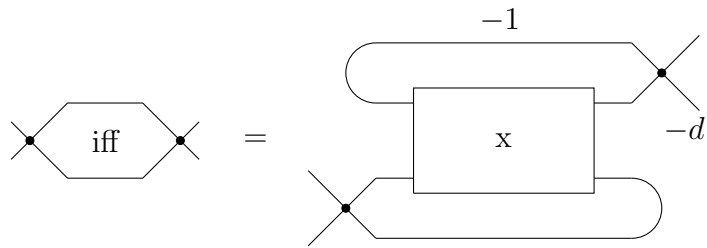
\begin{figure}[H]

\begin{tikzpicture}[scale=1]
  \iffgadget{0}{0}{n}{}{$=$}
  \xgadget{\xx-0.2}{0}{n}{n}{n}{n}{0}{}{}

  \draw (\xx+2.5,0) arc (90:-90:2*\ext);
  \draw (\xx-0.5-2*\ext,-4*\ext) -- (\xx+2.5,-4*\ext);

  \draw (\xx-0.5-2*\ext,-4*\ext) -- ++(-2*\ext-0.5,2*\ext+0.5);
  \draw (\xx-0.5-2*\ext,0) -- ++(-2*\ext-0.5,-2*\ext-0.5);
  \node[vertex] at (\xx-0.9-2*\ext,-2*\ext) {};

  \draw (\xx-0.5-2*\ext,1) arc (270:90:2*\ext);
  \draw (\xx-0.5-2*\ext,1+4*\ext) -- (\xx+2.5,1+4*\ext);

  \draw (\xx+2.5,1) -- ++(2*\ext+0.5,2*\ext+0.5);
  \draw (\xx+2.5,1+4*\ext) -- ++(2*\ext+0.5,-2*\ext-0.5);
  \node[vertex] at (\xx+2.5+0.5-0.1,1+2*\ext) {};

  \draw (\xx+0.75,1+4*\ext) node [anchor=south] {$-1$};
  \draw (\xx+2.5+0.7,0.65) node {$-d$};
\end{tikzpicture}

\caption{The iff gadget and its symbolic notation}
\label{figure-iff}
\end{figure}

\noindent Please notice the $-d$ label near one of the external edges. It indicates that, when applying the iff gadget, the original weight of this edge has to be multiplied by $-d$. As above, we have $d=1$ in the case of determinant and $d=-1$ for permanent.

If a component of a cycle cover $c$ passes left to right through the gadget then, by its symmetry, there exists a different cycle cover $c'$ with $w(c')=-w(c)$, thus $c$ and $c'$ cancel out.

The remaining, easy to compute, nonzero signatures of the iff gadget are shown in {\bf Figure \ref{figure-iff-signature}}.

\def\xx{5.5}
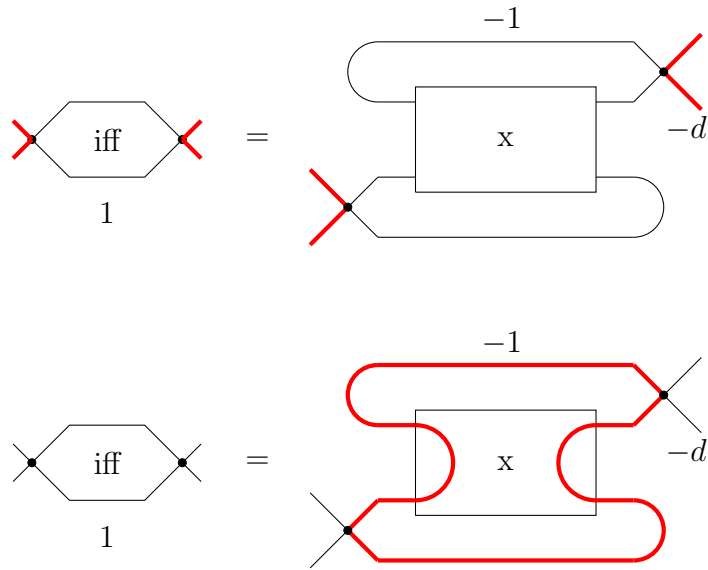
\begin{figure}[H]

\begin{tikzpicture}[scale=1]
  \def\yy{0}
  \iffgadget{0}{\yy}{t}{$1$}{$=$}
  \xgadget{\xx-0.2}{0}{n}{n}{n}{n}{0}{}{}

  \draw (\xx+2.5,0) arc (90:-90:2*\ext);
  \draw (\xx-0.5-2*\ext,-4*\ext) -- (\xx+2.5,-4*\ext);

  \draw (\xx-0.5-2*\ext,-4*\ext) -- ++(-2*\ext-0.5,2*\ext+0.5);
  \draw (\xx-0.5-2*\ext,0) -- ++(-2*\ext-0.5,-2*\ext-0.5);
  \draw[red, ultra thick]
    (\xx-0.5-2*\ext-2*\ext,2*\ext-4*\ext) -- ++(-0.5,0.5);
  \draw[red, ultra thick]
    (\xx-0.5-2*\ext-2*\ext,0-2*\ext) -- ++(-0.5,-0.5);
  \node[vertex] at (\xx-0.9-2*\ext,-2*\ext) {};

  \draw (\xx-0.5-2*\ext,1) arc (270:90:2*\ext);
  \draw (\xx-0.5-2*\ext,1+4*\ext) -- (\xx+2.5,1+4*\ext);

  \draw (\xx+2.5,1) -- ++(2*\ext+0.5,2*\ext+0.5);
  \draw (\xx+2.5,1+4*\ext) -- ++(2*\ext+0.5,-2*\ext-0.5);
  \draw[red, ultra thick]
    (\xx+2*\ext+2.5,2*\ext+1) -- ++(0.5,0.5);
  \draw[red, ultra thick]
    (\xx+2*\ext+2.5,1-2*\ext+4*\ext) -- ++(0.5,-0.5);
  \node[vertex] at (\xx+2.5+0.5-0.1,1+2*\ext) {};

  \draw (\xx+0.75,1+4*\ext) node [anchor=south] {$-1$};
  \draw (\xx+2.5+0.7,0.65) node {$-d$};

  \def\yy{-4.3}
  \iffgadget{0}{\yy}{n}{$1$}{$=$}
  \xgadget{\xx-0.2}{\yy}{t}{t}{t}{t}{2}{}{}

  \draw[red, ultra thick] (\xx+2.5,0+\yy) arc (90:-90:2*\ext);
  \draw[red, ultra thick] (\xx-0.5-2*\ext,-4*\ext+\yy) -- (\xx+2.5,-4*\ext+\yy);

  \draw[red, ultra thick] (\xx-0.5-2*\ext,-4*\ext+\yy) -- ++(-2*\ext,2*\ext);
  \draw[red, ultra thick] (\xx-0.5-2*\ext,0+\yy) -- ++(-2*\ext,-2*\ext);
  \draw
    (\xx-0.5-2*\ext-2*\ext,2*\ext-4*\ext+\yy) -- ++(-0.5,0.5);
  \draw
    (\xx-0.5-2*\ext-2*\ext,0-2*\ext+\yy) -- ++(-0.5,-0.5);
  \node[vertex] at (\xx-0.9-2*\ext,-2*\ext+\yy) {};

  \draw[red, ultra thick] (\xx-0.5-2*\ext,1+\yy) arc (270:90:2*\ext);
  \draw[red, ultra thick] (\xx-0.5-2*\ext,1+4*\ext+\yy) -- (\xx+2.5,1+4*\ext+\yy);

  \draw[red, ultra thick] (\xx+2.5,1+\yy) -- ++(2*\ext,2*\ext);
  \draw[red, ultra thick] (\xx+2.5,1+4*\ext+\yy) -- ++(2*\ext,-2*\ext);
  \draw
    (\xx+2*\ext+2.5,2*\ext+1+\yy) -- ++(0.5,0.5);
  \draw
    (\xx+2*\ext+2.5,1-2*\ext+4*\ext+\yy) -- ++(0.5,-0.5);
  \node[vertex] at (\xx+2.5+0.5-0.1,1+2*\ext+\yy) {};

  \draw (\xx+0.75,1+4*\ext+\yy) node [anchor=south] {$-1$};
  \draw (\xx+2.5+0.7,0.65+\yy) node {$-d$};
\end{tikzpicture}

\caption{The nonzero signatures of the iff gadget}
\label{figure-iff-signature}
\end{figure}

In the determinantal case, the inner loop introduces the $-1$ sign. This loop is not seen outside of the iff gadget hence its sign is included in the signature.


In {\bf Figure \ref{figure-iff}}, we chose a construction that avoids loops and multiple edges. However, if we do not have to avoid multiple edges, we may use a simpler and more obvious construction of the iff gadget, shown in {\bf Figure \ref{figure-iff-multiple}} below.

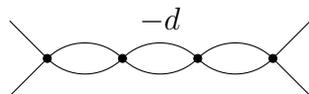
\begin{figure}[H]
\begin{tikzpicture}[scale=1]
  \tikzstyle{vertex}=[circle,minimum size=\nodesize,inner sep=0pt,draw=black,fill]

  \node [vertex] at (0,0) {};
  \node [vertex] at (1,0) {};
  \node [vertex] at (2,0) {};
  \node [vertex] at (3,0) {};

  \foreach \x in {0,1,2} {
    \draw (1+\x,0) arc (-45:-135:0.7071);
    \draw (0+\x,0) arc (135:45:0.7071); }

  \draw (0,0) --++(-0.5,0.5);
  \draw (0,0) --++(-0.5,-0.5);
  \draw (3,0) --++(0.5,0.5);
  \draw (3,0) --++(0.5,-0.5);

  \draw (1.5,1.414/2-0.5) node [anchor=south] {$-d$};
\end{tikzpicture}
\caption{A version of the iff gadget when multiple edges are allowed.}
\label{figure-iff-multiple}
\end{figure}

{\bf (3) The extended iff gadget.} We use the skew crossing gadgets and the iff gadget, defined above, to synchronize any two edges $e_1$ and $e_2$ in the graph. By the synchronization of the edges $e_1$ and $e_2$ we mean that an insertion of the {\em extended iff gadget} into the graph causes that determinant (resp. permanent) of the new graph to count precisely those cycle covers, of the original graph, which contain either both or none of the edges $e_1$ and $e_2$. Otherwise, the gadget has no effect on the remaining cycle covers. In {\bf Figures \ref{figure-extended-iff-1} and \ref{figure-extended-iff-2}} we present the construction of the extended iff gadget in the case when it goes across two other edges.

The remaining cases, when the number of edges $r_i$, $i=1,2,\ldots,r_n$, the gadget passes through, is different than $2$ and different configurations of the $r_i$'s that belong or not to the cycle cover, are analogous.

{\bf Figure \ref{figure-extended-iff-symbol}} shows the symbolic notation of the gadget.

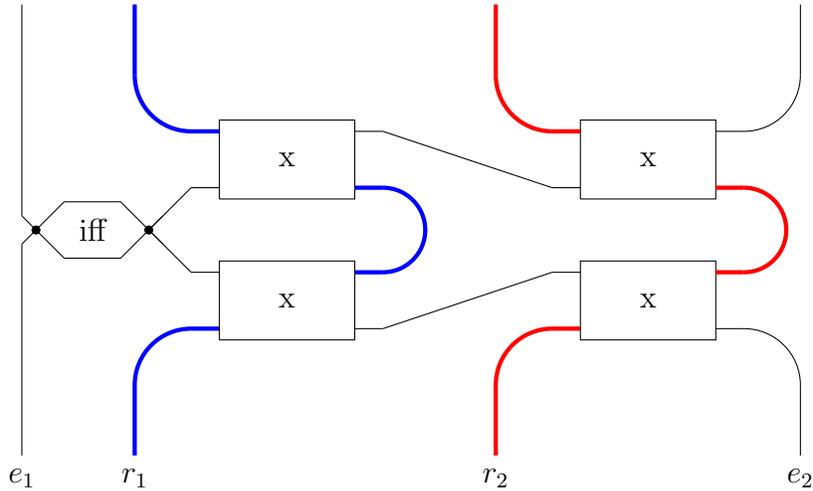
\begin{figure}[H]
\begin{tikzpicture}[scale=0.75,photon/.style={decorate,decoration={snake,post length=1mm}}]
  \tikzstyle{vertex}=[circle,minimum size=\nodesize,inner sep=0pt,draw=black,fill]
  \tikzstyle{vertexempty}=[circle,minimum size=\nodesize*2,inner sep=0pt,draw=black]

  \draw (0,0.25)-- ++(0,3.75);
  \draw (0,-0.25)-- ++(0,-3.75);
  \iffgadget{0.25}{-0.5}{n}{}{}

  \draw (2.25,0) -- ++(0.75,0.75);
  \draw[blue, ultra thick] (3,1.75) arc (270:180:1);
  \draw[blue, ultra thick] (2,2.75) -- ++(0,1.25);
  \xgadget{3.7}{0.75}{n}{n}{n}{n}{0}{}{}
  \draw (2.25,0) -- ++(0.75,-0.75);
  \draw[blue, ultra thick] (3,-1.75) arc (90:180:1);
  \draw[blue, ultra thick] (2,-2.75) -- ++(0,-1.25);
  \xgadget{3.7}{-1.75}{n}{n}{n}{n}{0}{}{}
  \draw[blue, ultra thick] (6.4,0.75) arc (90:-90:0.75);

  \draw[blue, ultra thick] (3,1.75)--++(0.5,0);
  \draw[blue, ultra thick] (3,-1.75)--++(0.5,0);
  \draw[blue, ultra thick] (5.9,0.75)--++(0.5,0);
  \draw[blue, ultra thick] (5.9,-0.75)--++(0.5,0);

  \draw[red, ultra thick] (9.4,1.75) arc (-90:-180:1);
  \draw[red, ultra thick] (8.4,2.75) -- ++(0,1.25);
  \xgadget{10.1}{0.75}{n}{n}{n}{n}{0}{}{}
  \draw (6.4,1.75) -- (9.4,0.75);
  \draw (6.4,-1.75) -- (9.4,-0.75);
  \draw[red, ultra thick] (9.4,-1.75) arc (90:180:1);
  \draw[red, ultra thick] (8.4,-2.75) -- ++(0,-1.25);
  \xgadget{10.1}{-1.75}{n}{n}{n}{n}{0}{}{}
  \draw[red, ultra thick] (12.8,0.75) arc (90:-90:0.75);

  \draw[red, ultra thick] (9.4,1.75)--++(0.5,0);
  \draw[red, ultra thick] (9.4,-1.75)--++(0.5,0);
  \draw[red, ultra thick] (12.3,0.75)--++(0.5,0);
  \draw[red, ultra thick] (12.3,-0.75)--++(0.5,0);

  \draw (12.8,1.75) arc (-90:0:1);
  \draw (13.8,2.75) -- ++(0,1.25);
  \draw (12.8,-1.75) arc (90:0:1);
  \draw (13.8,-2.75) -- ++(0,-1.25);

  \foreach \xx/\name in {0/e_1,2/r_1,8.4/r_2,13.8/e_2}
    \node at (\xx,-4.4) {$\name$};

\end{tikzpicture}
\caption{The extended iff gadget synchronizing two edges $e_1$ and $e_2$, across two other edges $r_1$ and $r_2$. The colors indicate the way the paths, representing $r_1$ and $r_2$, traverse the gadget in the case when both the $r_i$'s belong to the cycle cover but both the $e_i$'s do not.} \label{figure-extended-iff-1}
\end{figure}

\begin{figure}[H]
\begin{tikzpicture}[scale=0.75,photon/.style={decorate,decoration={snake,post length=1mm}}]
  \tikzstyle{vertex}=[circle,minimum size=\nodesize,inner sep=0pt,draw=black,fill]
  \tikzstyle{vertexempty}=[circle,minimum size=\nodesize*2,inner sep=0pt,draw=black]

  \draw[ultra thick] (0,0.25)-- ++(0,3.75);
  \draw[ultra thick] (0,-0.25)-- ++(0,-3.75);
  \draw[ultra thick] (0.25,0)-- ++(-0.25,0.25);
  \draw[ultra thick] (0.25,0)-- ++(-0.25,-0.25);
  \iffgadget{0.25}{-0.5}{n}{}{}
  \draw[blue, ultra thick] (2.25,0) -- ++(0.76,0.76);
  \draw[blue, ultra thick] (2.25,0) -- ++(0.76,-0.76);
  \node[vertex] at (2.25,0) {};

  \draw[blue, ultra thick] (3,1.75) arc (270:180:1);
  \draw[blue, ultra thick] (2,2.75) -- ++(0,1.25);
  \xgadget{3.7}{0.75}{n}{n}{n}{n}{2}{}{}
  \draw[blue, ultra thick] (3,-1.75) arc (90:180:1);
  \draw[blue, ultra thick] (2,-2.75) -- ++(0,-1.25);
  \xgadget{3.7}{-1.75}{n}{n}{n}{n}{2}{}{}
  \draw[blue, ultra thick] (3.5,1.75) arc (90:-90:0.5);
  \draw[blue, ultra thick] (3.5,-0.75) arc (90:-90:0.5);
  \draw[red, ultra thick] (6.4,0.75) arc (90:-90:0.75);

  \draw[blue, ultra thick] (3,1.75)--++(0.5,0);
  \draw[blue, ultra thick] (3,-1.75)--++(0.5,0);
  \draw[blue, ultra thick] (3,0.75)--++(0.5,0);
  \draw[blue, ultra thick] (3,-0.75)--++(0.5,0);

  \draw[red, ultra thick] (5.9,1.75) arc (90:270:0.5);
  \draw[red, ultra thick] (5.9,-0.75) arc (90:270:0.5);
  \draw[red, ultra thick] (5.9,1.75)--++(0.5,0);
  \draw[red, ultra thick] (5.9,-1.75)--++(0.5,0);
  \draw[red, ultra thick] (5.9,0.75)--++(0.5,0);
  \draw[red, ultra thick] (5.9,-0.75)--++(0.5,0);

  \draw[red, ultra thick] (9.4,1.75) arc (-90:-180:1);
  \draw[red, ultra thick] (8.4,2.75) -- ++(0,1.25);
  \xgadget{10.1}{0.75}{n}{n}{n}{n}{2}{}{}
  \draw[red, ultra thick] (6.4,1.75) -- (9.4,0.75);
  \draw[red, ultra thick] (6.4,-1.75) -- (9.4,-0.75);
  \draw[red, ultra thick] (9.4,-1.75) arc (90:180:1);
  \draw[red, ultra thick] (8.4,-2.75) -- ++(0,-1.25);
  \xgadget{10.1}{-1.75}{n}{n}{n}{n}{2}{}{}
  \draw[ultra thick] (12.8,0.75) arc (90:-90:0.75);

  \draw[red, ultra thick] (9.9,1.75) arc (90:-90:0.5);
  \draw[red, ultra thick] (9.9,-0.75) arc (90:-90:0.5);

  \draw[red, ultra thick] (9.4,1.75)--++(0.5,0);
  \draw[red, ultra thick] (9.4,-1.75)--++(0.5,0);
  \draw[red, ultra thick] (9.4,0.75)--++(0.5,0);
  \draw[red, ultra thick] (9.4,-0.75)--++(0.5,0);

  \draw[ultra thick] (12.3,1.75)--++(0.5,0);
  \draw[ultra thick] (12.3,-1.75)--++(0.5,0);
  \draw[ultra thick] (12.3,0.75)--++(0.5,0);
  \draw[ultra thick] (12.3,-0.75)--++(0.5,0);
  \draw[ultra thick] (12.3,1.75) arc (90:270:0.5);
  \draw[ultra thick] (12.3,-0.75) arc (90:270:0.5);
  \draw[ultra thick] (12.8,1.75) arc (-90:0:1);
  \draw[ultra thick] (13.8,2.75) -- ++(0,1.25);
  \draw[ultra thick] (12.8,-1.75) arc (90:0:1);
  \draw[ultra thick] (13.8,-2.75) -- ++(0,-1.25);

  \foreach \xx/\name in {0/e_1,2/r_1,8.4/r_2,13.8/e_2}
    \node at (\xx,-4.4) {$\name$};

\end{tikzpicture}
\caption{The same gadget as in Figure \ref{figure-extended-iff-1}. The case when all the $r_i$'s and the $e_i$'s belong to the cycle cover.}
\label{figure-extended-iff-2}
\end{figure}
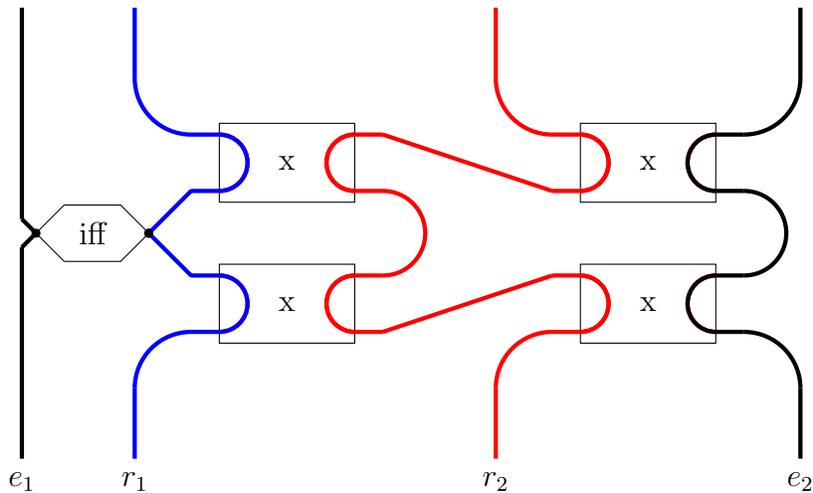

\begin{figure}[H]
\begin{tikzpicture}[scale=0.75,photon/.style={decorate,decoration={snake,post length=1mm}}]
  \tikzstyle{vertex}=[circle,minimum size=\nodesize,inner sep=0pt,draw=black,fill]
  \tikzstyle{vertexempty}=[circle,minimum size=\nodesize*2,inner sep=0pt,draw=black]

  \foreach \xx in {0,1,2,3}
    \draw (\xx,1)--(\xx,-1);
  \draw[blue, ultra thick] (1,1)--(1,-1);
  \draw[red, ultra thick] (2,1)--(2,-1);

  \node[vertexempty] at (0,0) {};
  \node[vertexempty] at (3,0) {};
  \draw[-,photon] (0,0)--(3,0);
  \foreach \xx/\name in {0/e_1,1/r_1,2/r_2,3/e_2}
    \node at (\xx,-1.4) {$\name$};

\end{tikzpicture}
\caption{A symbolic notation for the gadget shown in {\bf Figures \ref{figure-extended-iff-1} and \ref{figure-extended-iff-2}}.} \label{figure-extended-iff-symbol}
\end{figure}
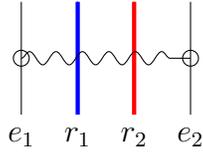

We see that, in {\bf Figures \ref{figure-extended-iff-1} and \ref{figure-extended-iff-2}}, every two skew crossing gadgets that are drawn one above the other, have both the same signature, either $1$ or $-1$. Therefore the crossing gadgets do not contribute to the values of the signature of the extended iff gadget. Neither does the iff gadget.

It is straightforward to see that those cycle covers, of the modified graph, that contribute to either determinant or permanent must contain either both the edges $e_1$ and $e_1$ or none of them.

It remains to notice that insertion of the extended iff gadget affects neither the number of cycle covers nor parity of the number of cycles in a cycle cover. The only restriction is that $e_1$ belongs to a cover if and only if $e_2$ does:

When neither $e_1$ nor $e_2$ is in the cover, a case similar to the one shown in {\bf Figure \ref{figure-extended-iff-1}}, whether a path representing $r_i$ belongs to the cover or not, does not affect other paths.

When both $e_1$ and $e_2$ are in the cover, we start with a case similar to the one shown in {\bf Figure \ref{figure-extended-iff-2}}, where the cycle cover passes through all the $r_i$'s, $i=1,2,\ldots,n$.  Removal of any path, representing an edge $r_i$, from the cycle cover, causes its central part to be filled by the next path on the right.

We conclude that whenever the extended iff gadget is placed between any two edges $e_1$ and $e_1$ as in {\bf Figure \ref{figure-extended-iff-symbol}}, the determinant as well as the permanent of the resulting graph counts exactly those cycle covers, of the original graph, which contain either both the $e_i$'s or none of them.

{\bf (4) The variable setting gadget.} For every variable $x_i$ in the formula $\phi$ we construct a gadget, as shown in {\bf Figure \ref{figure-variable}}.

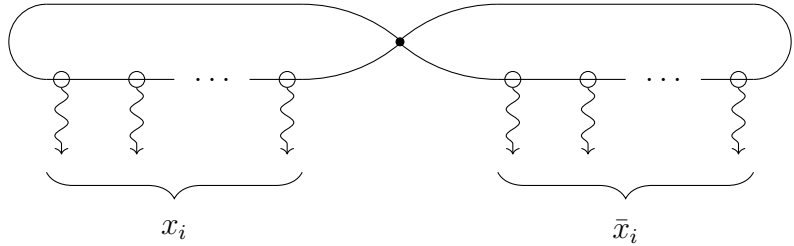
\begin{figure}[H]
\def\a{0.2}
\begin{tikzpicture}[scale=1,photon/.style={decorate,decoration={snake,post length=1mm}}]
  \tikzstyle{vertex}=[circle,minimum size=\nodesize,inner sep=0pt,draw=black,fill]
  \tikzstyle{vertexempty}=[circle,minimum size=\nodesize*2,inner sep=0pt,draw=black]

  \foreach \xx in {0,1,3,6,7,9}
  {
    \node[vertexempty] (a) at (\xx,0) {};
    \draw[->,photon] (a) -- ++(0,-1);
  }

  \draw[-] (-0.2,0) arc (-90:-270:0.5);
  \draw[-] (-0.2,1) -- ++ (3.4,0);
  \draw[-] (-0.2,0) -- (1.5,0);
  \node at (2,0) {$\ldots$};
  \draw[-] (2.5,0) -- (3.2,0);
  \draw[-] (3.2,1) arc (90:49:2);
  \draw[-] (3.2,0) arc (-90:-49:2);

  \node[vertex] at (4.5,0.5) {};

  \draw[-] (5.8,1) arc (90:131:2);
  \draw[-] (5.8,0) arc (-90:-131:2);
  \draw[-] (5.8,1) -- ++ (3.4,0);
  \draw[-] (9.2,1) arc (90:-90:0.5);
  \draw[-] (5.8,0) -- ++(1.7,0);
  \node at (8,0) {$\ldots$};
  \draw[-] (8.5,0) -- ++(0.7,0);

  \coordinate (a) at (-0.2,-1.2);
  \coordinate (b) at (3.2,-1.2);
  \draw[decorate,decoration={brace,amplitude=10pt,raise=1pt,mirror},yshift=0pt] (a) -- (b) node [midway,yshift=-15pt][anchor=north]{$x_i$};
  \coordinate (a) at (5.8,-1.2);
  \coordinate (b) at (9.2,-1.2);
  \draw[decorate,decoration={brace,amplitude=10pt,raise=1pt,mirror},yshift=0pt] (a) -- (b) node [midway,yshift=-15pt][anchor=north]{$\bar x_i$};

\end{tikzpicture}
\caption{The variable setting gadget}
\label{figure-variable}
\end{figure}

The gadget consists of two loops connected to a single vertex. Clearly every cycle cover contains exactly one of the two loops and the signature of the gadget is always $1$ so that addition of the variable setting gadget does not affect the determinant or permanent of the graph.
One of these two loops represents the variable $x_i$, while the other represents its negation $\bar x_i$.
The wavy lines in {\bf Figure \ref{figure-variable}} denote the extended iff gadgets which synchronize a loop with those edges in the clause gadgets, shown in {\bf Figure \ref{figure-clause-gadget}} below, which represent the same $x_i$ or $\bar x_i$, respectively. We have one extended iff gadget per one occurrence of $x_i$ or $\bar x_i$ in the formula $\phi$.

If $x_i$ or $\bar x_i$ is not present in the formula $\phi$ then the graph we obtain has a loop. If this is undesirable we may synchronize this loop with itself, using the iff gadget, which results in a loopless graph.

{\bf (5) The clause gadget.} For every clause of the form $a\vee b\vee c$ in the formula $\phi$ we construct a gadget, as shown in {\bf Figure \ref{figure-clause-gadget}}.

\newcommand\clausegadget[9]{
  \node[vertex] (v1) at (0,0) {};
  \node[vertex] (v2) at (1,1.5) {};
  \node[vertex] (v3) at (1,0.5) {};
  \node[vertex] (v4) at (1,-0.5) {};
  \node[vertex] (v5) at (3,1.5) {};
  \node[vertex] (v6) at (3,0.5) {};
  \node[vertex] (v7) at (3,-1.5) {};
  \node[vertex] (v8) at (4,0) {};
  \node[vertexempty] (a) at (1,-1) {};
  \node[vertexempty] (b) at (2,0.5) {};
  \node[vertexempty] (c) at (3,-2) {};

  \draw\ifnum\value{#2}=1[red, ultra thick]\fi (v2) -- ++(2,0);
  \draw\ifnum\value{#7}=1[red, ultra thick]\fi (v3) -- ++(2,0);
  \draw\ifnum\value{#4}=1[red, ultra thick]\fi (v2) -- ++(0,-1);
  \draw\ifnum\value{#5}=1[red, ultra thick]\fi (v5) -- ++(0,-1);

  \draw\ifnum\value{#1}=1[red, ultra thick]\fi (v2) arc (90:180:1);
  \draw\ifnum\value{#1}=1[red, ultra thick]\fi (v1) -- ++(0,0.5);
  \draw\ifnum\value{#6}=1[red, ultra thick]\fi (v3) arc (90:145:1.25);

  \draw\ifnum\value{#3}=1[red, ultra thick]\fi (v5) arc (90:0:1);
  \draw\ifnum\value{#3}=1[red, ultra thick]\fi (v8)--++(0,0.5);
  \draw\ifnum\value{#8}=1[red, ultra thick]\fi (v6) arc (90:35:1.25);

  \draw (v4) -- ++(2,0);
  \draw (v4) arc (-90:-145:1.25);
  \draw (3,-0.5) arc (-90:-35:1.25);
  \draw (c) arc (-90:270:0.25);

  \draw (v7) -- ++(-2,0);
  \draw (v7) arc (-90:0:1);
  \draw (1,-1.5) arc (-90:-180:1);
  \draw (v1) -- ++(0,-0.5);
  \draw (v8) -- ++(0,-0.5);
  \draw (a) arc (-90:270:0.25);

  \ifnum\value{#9}=1
    \draw[red, ultra thick] (v4) -- ++(2,0);
    \draw[red, ultra thick] (v4) arc (-90:-145:1.25);
    \draw[red, ultra thick] (3,-0.5) arc (-90:-35:1.25);
    \draw[red, ultra thick] (c) arc (-90:270:0.25);
  \fi
  \ifnum\value{#9}=2
    \draw[red, ultra thick] (v7) -- ++(-2,0);
    \draw[red, ultra thick] (v7) arc (-90:0:1);
    \draw[red, ultra thick] (1,-1.5) arc (-90:-180:1);
    \draw[red, ultra thick] (v1) -- ++(0,-0.5);
    \draw[red, ultra thick] (v8) -- ++(0,-0.5);
    \draw[red, ultra thick] (a) arc (-90:270:0.25);
  \fi
  \ifnum\value{#9}=3
    \draw[red, ultra thick] (v4) -- ++(2,0);
    \draw[red, ultra thick] (v4) arc (-90:-145:1.25);
    \draw[red, ultra thick] (3,-0.5) arc (-90:-35:1.25);

    \draw[red, ultra thick] (v7) -- ++(-2,0);
    \draw[red, ultra thick] (v7) arc (-90:0:1);
    \draw[red, ultra thick] (1,-1.5) arc (-90:-180:1);
    \draw[red, ultra thick] (v1) -- ++(0,-0.5);
    \draw[red, ultra thick] (v8) -- ++(0,-0.5);
  \fi
  \ifnum\value{#9}=-1
    \draw[red, ultra thick] (a) arc (-90:270:0.25);
    \draw[red, ultra thick] (c) arc (-90:270:0.25);
  \fi

  \draw[->,photon] (a) -- ++(0,-2);
  \draw[->,photon] (b) -- ++(0,-3.5);
  \draw[->,photon] (c) -- ++(0,-1);

  \node[vertex] (v1) at (v1) {};
  \node[vertex] (v2) at (v2) {};
  \node[vertex] (v3) at (v3) {};
  \node[vertex] (v4) at (v4) {};
  \node[vertex] (v5) at (v5) {};
  \node[vertex] (v6) at (v6) {};
  \node[vertex] (v7) at (v7) {};
  \node[vertex] (v8) at (v8) {};

}

\begin{figure}[H]
\begin{tikzpicture}[scale=1,photon/.style={decorate,decoration={snake,post length=1mm}}]
  \tikzstyle{vertex}=[circle,minimum size=\nodesize,inner sep=0pt,draw=black,fill]
  \tikzstyle{vertexempty}=[circle,minimum size=\nodesize*2,inner sep=0pt,draw=black]

  \clausegadget{n}{n}{n}{n}{n}{n}{n}{n}{n}
  \node at (3,1) [anchor=east] {$-d$};

  \node at (1,-3) [anchor=north] {$a$};
  \node at (2,-3) [anchor=north] {$b$};
  \node at (3,-3) [anchor=north] {$c$};
\end{tikzpicture}
\caption{The clause gadget encoding $a\vee b\vee c$} \label{figure-clause-gadget}
\end{figure}
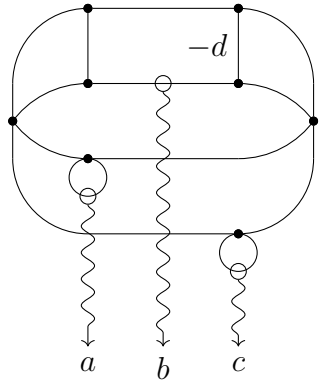

Three edges of the gadget represent the three literals $a,b,c\in\{x_1,\bar x_1,x_2,\bar x_2,\ldots\}$ in the clause. The wavy lines indicate the extended iff gadgets which synchronize these three edges with those loops in the variable setting gadgets which represent the same literal.

All the possible cycle covers of the clause gadget are shown in {\bf Figure \ref{figure-clause-gadget-covers}}. We see that the signature is $0$ if all the three literals are set to {\em false}, represented by $0$. Otherwise, all the signatures are equal $1$ in the permanental case and $-1$ in the determinantal case.

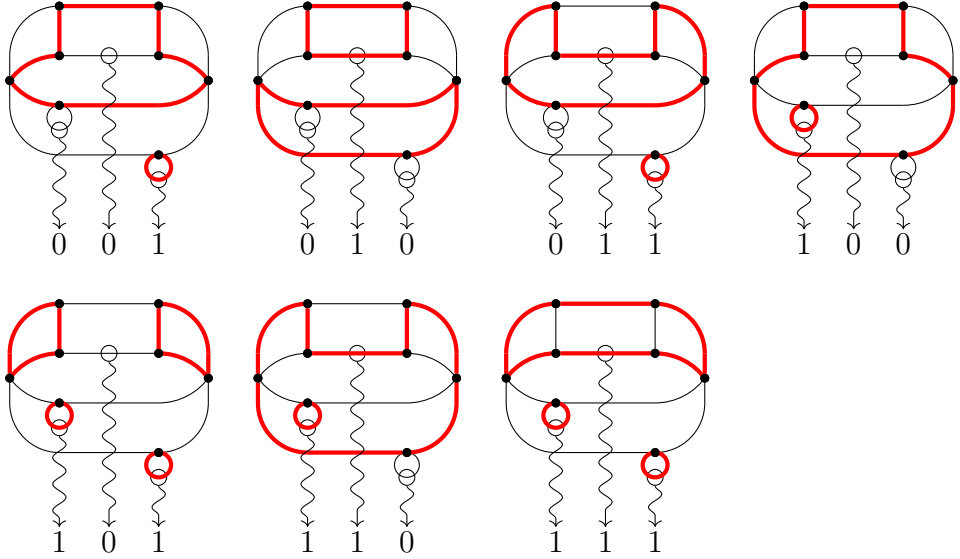
\begin{figure}[H]
\begin{tikzpicture}[scale=0.66,photon/.style={decorate,decoration={snake,post length=1mm}}]
  \tikzstyle{vertex}=[circle,minimum size=\nodesize,inner sep=0pt,draw=black,fill]
  \tikzstyle{vertexempty}=[circle,minimum size=\nodesize*2,inner sep=0pt,draw=black]

  \begin{scope}
    \clausegadget{n}{t}{n}{t}{t}{t}{n}{t}{t}
    \foreach \xx/\name in {1/0,2/0,3/1}
    \node at (\xx,-3.3) {$\name$};
  \end{scope}
  \begin{scope}[shift={(5,0)}]
    \clausegadget{n}{t}{n}{t}{t}{n}{t}{n}{dt}
    \foreach \xx/\name in {1/0,2/1,3/0}
    \node at (\xx,-3.3) {$\name$};
  \end{scope}
  \begin{scope}[shift={(10,0)}]
    \clausegadget{t}{n}{t}{t}{t}{n}{t}{n}{t}
    \foreach \xx/\name in {1/0,2/1,3/1}
    \node at (\xx,-3.3) {$\name$};
  \end{scope}
  \begin{scope}[shift={(15,0)}]
    \clausegadget{n}{t}{n}{t}{t}{t}{n}{t}{d}
    \foreach \xx/\name in {1/1,2/0,3/0}
    \node at (\xx,-3.3) {$\name$};
  \end{scope}
  \begin{scope}[shift={(0,-6)}]
    \clausegadget{t}{n}{t}{t}{t}{t}{n}{t}{nn}
    \foreach \xx/\name in {1/1,2/0,3/1}
    \node at (\xx,-3.3) {$\name$};
  \end{scope}
  \begin{scope}[shift={(5,-6)}]
    \clausegadget{t}{n}{t}{t}{t}{n}{t}{n}{d}
    \foreach \xx/\name in {1/1,2/1,3/0}
    \node at (\xx,-3.3) {$\name$};
  \end{scope}
  \begin{scope}[shift={(10,-6)}]
    \clausegadget{t}{t}{t}{n}{n}{t}{t}{t}{nn}
    \foreach \xx/\name in {1/1,2/1,3/1}
    \node at (\xx,-3.3) {$\name$};
  \end{scope}

\end{tikzpicture}
\caption{All the $7$ cycle covers of the clause gadget. The labels indicate the corresponding valuations of $a$, $b$, $c$.}
\label{figure-clause-gadget-covers}
\end{figure}

The construction of this gadget was chosen so as to make the exposition more straightforward, however, the gadget can be simplified by removing those edges that correspond to literals $a$ and $c$ and connecting the respective extended iff gadgets to $\bar a$ and $\bar c$.

\begin{theorem}
\label{theorem-degree-4}
  The undirected determinant and the undirected permanent of planar graphs, whose vertices have degrees $3$ or $4$, are \#P-complete.
\end{theorem}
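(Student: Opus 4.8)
The plan is to reduce the counting problem \#\textsc{3Sat} --- given a $3$-CNF formula $\phi(x_1,\ldots,x_n)$ with $m$ clauses, count the assignments satisfying it, which is a classical \#P-complete problem of Valiant --- to the evaluation of $\udet$ and $\uperm$ on planar graphs of maximum degree $4$. From $\phi$ I would build two graphs $A_\phi$ (for the determinant) and $B_\phi$ (for the permanent) sharing the same skeleton and differing only in the signs $d=\pm1$ attached to the marked edges of the iff and clause gadgets. I place one variable setting gadget (Figure \ref{figure-variable}) per variable $x_i$, one clause gadget (Figure \ref{figure-clause-gadget}) per clause, and join them with extended iff gadgets (Figure \ref{figure-extended-iff-symbol}), one per literal occurrence, so that the loop of the variable gadget representing a literal $\ell$ is synchronized with every literal edge of a clause gadget carrying $\ell$.

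Next I would settle the planar layout and the degree bound. Drawing the $n$ variable gadgets and the $m$ clause gadgets in two parallel rows, the synchronization wires generically cross one another and cross intervening literal edges; this is precisely the situation the extended iff gadget is designed for, since it can be routed across any number of edges $r_1,\ldots,r_k$ while leaving their contribution to the signature unchanged. Hence every required synchronization is realizable in the plane, and a routine inspection of the five gadgets shows that every vertex, including the junctions where external edges are identified, has degree $3$ or $4$. Since there are at most $3m$ literal occurrences and each wire crosses $O(m)$ edges, both graphs have size $O(m^2)$ and are produced from $\phi$ in polynomial time.

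The heart of the argument is the counting identity. Every cycle cover $c$ selects, in each variable gadget, exactly one of its two loops, which defines a truth assignment $\alpha_c$; the synchronizing gadgets have signature $1$ and force each literal edge of each clause gadget to lie in $c$ exactly when the corresponding literal is true under $\alpha_c$. By the signature computation preceding the theorem, a clause gadget has signature $0$ when all three literals are false and signature $-d$ (that is $-1$ for the determinant, $1$ for the permanent) otherwise, while the variable gadgets have signature $1$. Invoking the multiplicativity of determinantal and permanental signatures from Section \ref{section-degree-4} --- which, through the fixed pairing $P_0$ and the correction $\tau$, distributes the global factor $(-1)^N(-1)^{|c|}$ among the gadgets even when one cycle passes through several of them --- the sum factors over assignments, giving
\[
  \uperm B_\phi=\sum_{\alpha}\ \prod_{G}\mathop{\rm signature}(G)=\#\{\alpha:\alpha\models\phi\},
\]
while, with each satisfied clause contributing a factor $-1$,
\[
  \udet A_\phi=\sum_{\alpha\models\phi}(-1)^m=(-1)^m\,\#\{\alpha:\alpha\models\phi\}.
\]
Thus $(-1)^m\udet A_\phi$, $\uperm B_\phi$, and the number of satisfying assignments all coincide.

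Finally I would conclude completeness. The two displayed identities show that an oracle for $\udet$ or $\uperm$ on planar graphs of maximum degree $4$ computes $\#\{\alpha\models\phi\}$ after multiplication by $(-1)^m$, so both functions are \#P-hard; and since all weights lie in $\{1,-1\}$, each value is the difference of the number of positively and negatively signed cycle covers, two quantities manifestly in \#P, so the evaluation problem lies in $\mathrm{FP}^{\#\mathrm P}$ and the problems are \#P-complete. The step I expect to require the most care is the determinantal sign bookkeeping: because a cycle may traverse several gadgets, the parity $(-1)^{|c|}$ does not split additively, and one must rely on the $\tau$-convention to guarantee that the local determinantal signatures nonetheless multiply to the correct global sign. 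The planar routing of all synchronizations at maximum degree $4$ is the main combinatorial, as opposed to arithmetic, obstacle.
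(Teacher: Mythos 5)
Your proposal is correct and follows essentially the same route as the paper: a reduction from \#3SAT using one variable setting gadget per variable, one clause gadget per clause, and extended iff gadgets for synchronization, yielding the identity that the number of satisfying assignments equals $(-1)^m\udet A_\phi$ and $\uperm B_\phi$. The extra care you flag about the determinantal sign bookkeeping via the pairing $P_0$ and the correction $\tau$, and about planar routing, is exactly what the paper's gadget analysis in Section \ref{section-degree-4} is designed to handle.
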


\begin{proof}
  Consider a Boolean formula $\phi(x_1,x_2,\ldots,x_n)=c_1\wedge c_2\wedge\ldots\wedge c_m$, where $c_j=t_{j,1}\vee t_{j,2}\vee t_{j,3}$ with $t_{j,k}\in\{x_1,\bar x_1,x_2,\bar x_2,\ldots,x_n,\bar x_n\}$, where $j=1,2,\ldots,m$ and $k=1,2,3$. The $\bar x_i$ represents the negation of $x_i$.

  We construct the planar graphs $A_\phi$ for determinant and $B_\phi$ for permanent by taking one clause gadget for every $c_i$ and one variable setting gadget for every $x_i$. For every occurrence of $x_i$ or $\bar x_i$ in the formula, we use the extended iff gadget to synchronize the suitable edge of the variable setting gadget with the corresponding edge in the clause gadget.

  The properties of the gadgets, described in this section, imply that the number of satisfying valuations of $\phi$ is equal to $(-1)^m$ times the undirected determinant of $A_\phi$ and to the undirected permanent of $B_\phi$. This proves the theorem.
\end{proof}

\section{The undirected permanent of planar graphs of maximum degree $3$ reduces to the FKT algorithm}
\label{section-permanent-3}

Let $G$ be a weighted undirected planar graph of maximum degree $3$.
If $G$ contains a vertex of degree $0$ or $1$ then clearly
$$
  \uperm G = 0
$$
Suppose that all the vertices of $G$ have degree $2$ or $3$ and all the edges have nonzero weight. We define $G_{inv}$ as the subgraph of $G$ induced by vertices of degree $3$, and the weights of $G_{inv}$ are inverted, that is: $w_{G_{inv}}(e)=1/w_G(e)$, where $e$ denotes an edge and $w_G(e)$ denotes its weight in the graph $G$. The complement of a cycle cover of $G$ is a perfect matching in $G_{inv}$. In fact the complement, in the set of edges of $G$, establishes a one-to-one correspondence between the perfect matchings in $G_{inv}$ and the cycle covers of $G$.
The product of the weight of a cycle cover and the corresponding perfect matching is always equal to the product of weights of all the edges of $G$, denote it by $p$.
Since the weights in $G_{inv}$ are inverted, we see that
$$
  \uperm G = p\cdot\perfmatch G_{inv}
$$
Since $G_{inv}$ is planar, the sum of the weighted perfect matching in $G_{inv}$ is computed in polynomial time by the FKT algorithm.

\section{The undirected determinant of cubic planar graphs is \#P-complete}
\label{section-determinant-3}

Unlike the undirected permanent, it turns out that the undirected determinant is \#P-complete even in the case of cubic planar graphs. By Theorem \ref{theorem-degree-4} we already know that the undirected determinant is \#P-complete in the case of planar graphs whose vertices have degree $3$ or $4$, hence it is enough to construct a cubic planar gadget, whose signature is the same as that of a single vertex of degree $4$.

Let us draw attention to the fact that while all the gadgets constructed in Section \ref{section-degree-4} had edges of weight either $1$ or $-1$, here we need to know that $2$ is invertible or, at least, it is a non-zero-divisor.
This restriction is impossible to avoid since, modulo $2$, the undirected determinant coincides with the undirected permanent, and the latter reduces to the FKT algorithm, as proved in Section \ref{section-permanent-3}.

{\bf (1) The auxiliary gadget} is shown in {\bf Figure \ref{figure-auxiliary-gadget}}.

\newcommand\ugadgetoneshape{
  \node[vertex] (v1) at (0,0) {};
  \node[vertex] (v2) at (1,0) {};
  \node[vertex] (v3) at (2,0) {};
  \node[vertex] (v4) at (2,1) {};
  \node[vertex] (v5) at (2,2) {};
  \node[vertex] (v6) at (1,2) {};
  \node[vertex] (v7) at (0,2) {};
  \node[vertex] (v8) at (0,1) {};

  \node[vertex] (w1) at (1,0.5) {};
  \node[vertex] (w2) at (1.5,1) {};
  \node[vertex] (w3) at (1,1.5) {};
  \node[vertex] (w4) at (0.5,1) {};

  \draw (v1) -- (v3) -- (v5) -- (v7) -- (v1);
  \draw (v2) -- (w1);
  \draw (v4) -- (w2);
  \draw (v6) -- (w3);
  \draw (v8) -- (w4);
  \draw (w1) -- (w2) -- (w3) -- (w4) -- (w1);
  \draw (v7) -- ++(-0.5,0.5);
  \draw (v5) -- ++(0.5,0.5);
  \draw (v1) -- ++(-0.5,-0.5);
  \draw (v3) -- ++(0.5,-0.5);
}

\newcommand\ugadgetone[6]{ 
  \node[vertex] (v1) at (0,0) {};
  \node[vertex] (v2) at (1,0) {};
  \node[vertex] (v3) at (2,0) {};
  \node[vertex] (v4) at (2,1) {};
  \node[vertex] (v5) at (2,2) {};
  \node[vertex] (v6) at (1,2) {};
  \node[vertex] (v7) at (0,2) {};
  \node[vertex] (v8) at (0,1) {};

  \node[vertex] (w1) at (1,0.5) {};
  \node[vertex] (w2) at (1.5,1) {};
  \node[vertex] (w3) at (1,1.5) {};
  \node[vertex] (w4) at (0.5,1) {};

  \ifcase#4
    \draw[red, ultra thick] (v1) -- ++(-0.5,-0.5);
  \or
    \draw[red, ultra thick] (v3) -- ++(0.5,-0.5);
  \or
    \draw[red, ultra thick] (v5) -- ++(0.5,0.5);
  \or
    \draw[red, ultra thick] (v7) -- ++(-0.5,0.5);
  \fi
  \ifcase#5
    \draw[red, ultra thick] (v1) -- ++(-0.5,-0.5);
  \or
    \draw[red, ultra thick] (v3) -- ++(0.5,-0.5);
  \or
    \draw[red, ultra thick] (v5) -- ++(0.5,0.5);
  \or
    \draw[red, ultra thick] (v7) -- ++(-0.5,0.5);
  \fi

  \ifcase#1
    \draw[red, ultra thick] (w1) -- (w2) -- (w3) -- (w4);
    \draw[red, ultra thick] (w1) -- (v2);
    \draw[red, ultra thick] (w4) -- (v8);
    \or
    \draw[red, ultra thick] (w2) -- (w3) -- (w4) -- (w1);
    \draw[red, ultra thick] (w2) -- (v4);
    \draw[red, ultra thick] (w1) -- (v2);
    \or
    \draw[red, ultra thick] (w3) -- (w4) -- (w1) -- (w2);
    \draw[red, ultra thick] (w3) -- (v6);
    \draw[red, ultra thick] (w2) -- (v4);
    \or
    \draw[red, ultra thick] (w4) -- (w1) -- (w2) -- (w3);
    \draw[red, ultra thick] (w4) -- (v8);
    \draw[red, ultra thick] (w3) -- (v6);
  \fi
  \ifnum#2=9\else
    \ifnum#2=1\else\ifnum#3=1\else \draw[red, ultra thick] (v1) -- (v2); \fi\fi
    \ifnum#2=2\else\ifnum#3=2\else \draw[red, ultra thick] (v2) -- (v3); \fi\fi
    \ifnum#2=3\else\ifnum#3=3{}\else \draw[red, ultra thick] (v3) -- (v4); \fi\fi
    \ifnum#2=4\else\ifnum#3=4\else \draw[red, ultra thick] (v4) -- (v5); \fi\fi
    \ifnum#2=5\else\ifnum#3=5\else \draw[red, ultra thick] (v5) -- (v6); \fi\fi
    \ifnum#2=6\else\ifnum#3=6\else \draw[red, ultra thick] (v6) -- (v7); \fi\fi
    \ifnum#2=7\else\ifnum#3=7\else \draw[red, ultra thick] (v7) -- (v8); \fi\fi
    \ifnum#2=8\else\ifnum#3=8\else \draw[red, ultra thick] (v8) -- (v1); \fi\fi
  \fi

  \ifnum#6=1
    \draw[red, ultra thick] (w1) -- (v2) -- (v1) -- ++(-0.5,-0.5);
    \draw[red, ultra thick] (w2) -- (v4) -- (v3) -- ++(0.5,-0.5);
    \draw[red, ultra thick] (w3) -- (v6) -- (v5) -- ++(0.5,0.5);
    \draw[red, ultra thick] (w4) -- (v8) -- (v7) -- ++(-0.5,0.5);
    \draw[red, ultra thick] (w1) -- (w4) (w2) -- (w3);
  \fi
  \ifnum#6=2
    \draw[red, ultra thick] (w1) -- (v2) -- (v1) -- ++(-0.5,-0.5);
    \draw[red, ultra thick] (w2) -- (v4) -- (v3) -- ++(0.5,-0.5);
    \draw[red, ultra thick] (w3) -- (v6) -- (v5) -- ++(0.5,0.5);
    \draw[red, ultra thick] (w4) -- (v8) -- (v7) -- ++(-0.5,0.5);
    \draw[red, ultra thick] (w1) -- (w2) (w4) -- (w3);
  \fi
  \ifnum#6=3
    \draw[red, ultra thick] (w1) -- (w4) (w2) -- (w3);
    \draw[red, ultra thick] (w1) -- (w2) (w4) -- (w3);
  \fi
  \ifnum#6=4
    \draw[red, ultra thick] (w4) -- (v8) -- (v1) -- ++(-0.5,-0.5);
    \draw[red, ultra thick] (w1) -- (v2) -- (v3) -- ++(0.5,-0.5);
    \draw[red, ultra thick] (w2) -- (v4) -- (v5) -- ++(0.5,0.5);
    \draw[red, ultra thick] (w3) -- (v6) -- (v7) -- ++(-0.5,0.5);
    \draw[red, ultra thick] (w1) -- (w4) (w2) -- (w3);
  \fi
  \ifnum#6=5
    \draw[red, ultra thick] (w4) -- (v8) -- (v1) -- ++(-0.5,-0.5);
    \draw[red, ultra thick] (w1) -- (v2) -- (v3) -- ++(0.5,-0.5);
    \draw[red, ultra thick] (w2) -- (v4) -- (v5) -- ++(0.5,0.5);
    \draw[red, ultra thick] (w3) -- (v6) -- (v7) -- ++(-0.5,0.5);
    \draw[red, ultra thick] (w1) -- (w2) (w4) -- (w3);
  \fi
}

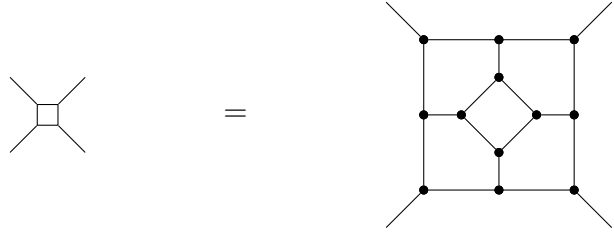
\begin{figure}[H]
\begin{tikzpicture}[scale=1,photon/.style={decorate,decoration={snake,post length=1mm}}]
  \tikzstyle{vertex}=[circle,minimum size=\nodesize,inner sep=0pt,draw=black,fill]
  \tikzstyle{vertexempty}=[circle,minimum size=\nodesize*2,inner sep=0pt,draw=black]

  \node[rectangle, draw] (d) at (-5,1) {};
  \draw (d) -- ++(-0.5,0.5);
  \draw (d) -- ++(0.5,0.5);
  \draw (d) -- ++(-0.5,-0.5);
  \draw (d) -- ++(0.5,-0.5);

  \node at (-2.5,1) {$=$};

  \begin{scope}
    \ugadgetoneshape
    \ugadgetone{9}{9}{9}{9}{9}{9}
  \end{scope}

\end{tikzpicture}
\caption{The auxiliary gadget and its symbolic notation.} \label{figure-auxiliary-gadget}
\end{figure}

The signatures, up to symmetry, corresponding to the different ways a cycle cover can traverse the auxiliary gadget, are shown in {\bf Figure \ref{figure-auxiliary-gadget-signature}}.
The last (i.e. the fourth) signature, is included only for the sake of completeness, as it is never realized in the subsequent constructions.
The value $s$, in the computation of this signature, is either $1$ or $-1$, depending on the choice which way of passing through the gadget is positive. We do not make this choice since in any case this signature is $0$.

\def\separate{3.5}
\begin{figure}[H]
\begin{tikzpicture}[scale=0.8,photon/.style={decorate,decoration={snake,post length=1mm}}]
  \tikzstyle{vertex}=[circle,minimum size=\nodesize,inner sep=0pt,draw=black,fill]
  \tikzstyle{vertexempty}=[circle,minimum size=\nodesize*2,inner sep=0pt,draw=black]

  \begin{scope}[shift={(0,0)}]
    \node[rectangle, draw] (d) at (0,0) {};
    \draw (d) -- ++(-0.5,0.5);
    \draw (d) -- ++(0.5,0.5);
    \draw (d) -- ++(-0.5,-0.5);
    \draw (d) -- ++(0.5,-0.5);

    \node at (1.5,0) {$=$};
    \node at (0,-1) {$1$};

    \begin{scope}[scale=0.75, shift={(4,-1)}]
      \ugadgetoneshape
      \ugadgetone{9}{1}{2}{9}{9}{9}
      \ugadgetone{9}{3}{4}{9}{9}{3}
      \node at (1,-1) {$1$};
    \end{scope}
  \end{scope}

  \begin{scope}[shift={(0,-\separate)}]
    \node[rectangle, draw] (d) at (0,0) {};
    \draw[red, ultra thick] (d) -- ++(-0.5,0.5);
    \draw (d) -- ++(0.5,0.5);
    \draw[red, ultra thick] (d) -- ++(-0.5,-0.5);
    \draw (d) -- ++(0.5,-0.5);

    \node at (1.5,0) {$=$};
    \node at (0,-1) {$2$};

    \begin{scope}[scale=0.75, shift={(4,-1)}]
      \ugadgetoneshape
      \ugadgetone{3}{6}{8}{0}{3}{9}
      \node at (1,-1) {$1$};
    \end{scope}
    \node at (6,0) {$+$};
    \begin{scope}[scale=0.75, shift={(10,-1)}]
      \ugadgetoneshape
      \ugadgetone{0}{1}{7}{0}{3}{9}
      \node at (1,-1) {$1$};
    \end{scope}
  \end{scope}

  \begin{scope}[shift={(0,-2*\separate)}]
    \node[rectangle, draw] (d) at (0,0) {};
    \draw (d) -- ++(-0.5,0.5);
    \draw[red, ultra thick] (d) -- ++(0.5,0.5);
    \draw[red, ultra thick] (d) -- ++(-0.5,-0.5);
    \draw (d) -- ++(0.5,-0.5);

    \node at (1.5,0) {$=$};
    \node at (0,-1) {$-2$};

    \begin{scope}[scale=0.75, shift={(4,-1)}]
      \ugadgetoneshape
      \ugadgetone{3}{5}{8}{0}{2}{9}
      \node at (1,-1) {$-1$};
    \end{scope}
    \node at (6,0) {$+$};
    \begin{scope}[scale=0.75, shift={(10,-1)}]
      \ugadgetoneshape
      \ugadgetone{1}{1}{4}{0}{2}{9}
      \node at (1,-1) {$-1$};
    \end{scope}
  \end{scope}

  \begin{scope}[shift={(0,-3*\separate)}]
    \node[rectangle, draw] (d) at (0,0) {};
    \draw[red, ultra thick] (d) -- ++(-0.5,0.5);
    \draw[red, ultra thick] (d) -- ++(0.5,0.5);
    \draw[red, ultra thick] (d) -- ++(-0.5,-0.5);
    \draw[red, ultra thick] (d) -- ++(0.5,-0.5);

    \node at (1.5,0) {$=$};
    \node at (0,-1) {$0$};

    \begin{scope}[scale=0.75, shift={(4,-1)}]
      \ugadgetoneshape
      \ugadgetone{9}{9}{9}{9}{9}{2}
      \node at (1,-1) {$s$};
    \end{scope}
    \node at (6,0) {$+$};
    \begin{scope}[scale=0.75, shift={(10,-1)}]
      \ugadgetoneshape
      \ugadgetone{9}{9}{9}{9}{9}{4}
      \node at (1,-1) {$s$};
    \end{scope}
    \node at (10.5,0) {$+$};
    \begin{scope}[scale=0.75, shift={(16,-1)}]
      \ugadgetoneshape
      \ugadgetone{9}{9}{9}{9}{9}{1}
      \node at (1,-1) {$-s$};
    \end{scope}
    \node at (15,0) {$+$};
    \begin{scope}[scale=0.75, shift={(22,-1)}]
      \ugadgetoneshape
      \ugadgetone{9}{9}{9}{9}{9}{5}
      \node at (1,-1) {$-s$};
    \end{scope}
  \end{scope}

\end{tikzpicture}
\caption{The signatures, up to symmetry, of the auxiliary gadget.}
\label{figure-auxiliary-gadget-signature}
\end{figure}
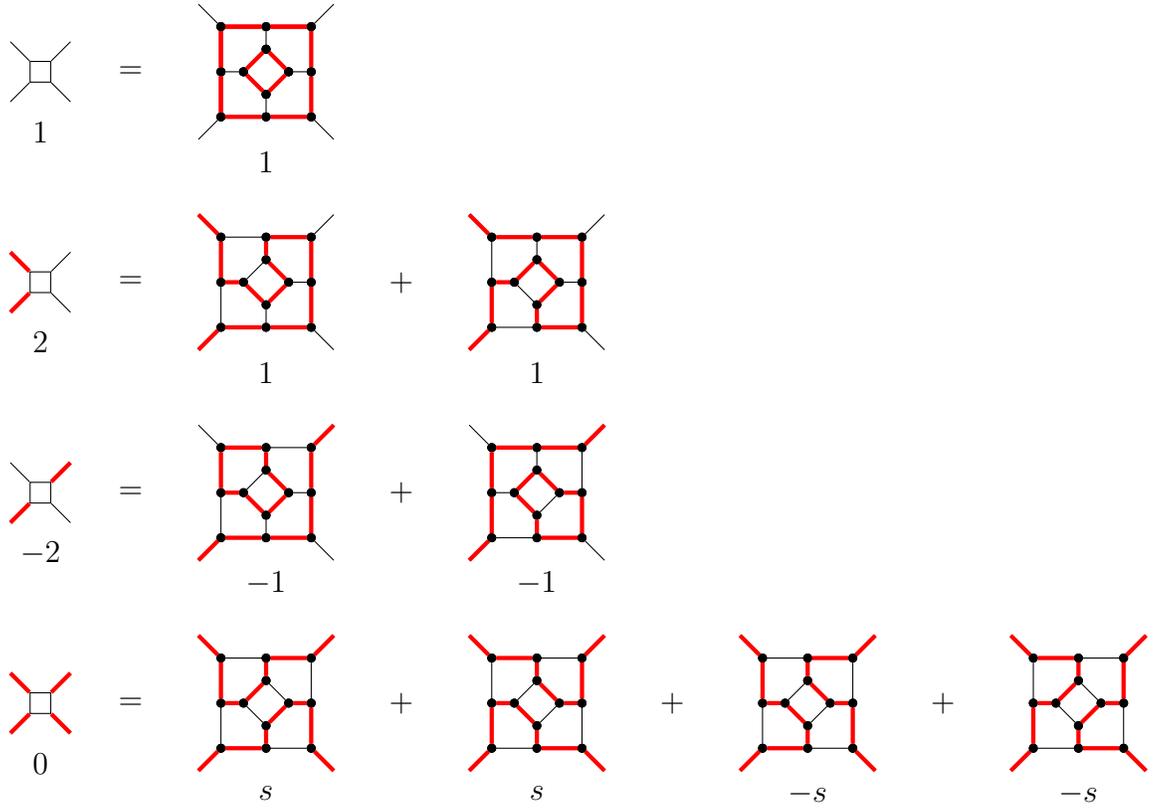

{\bf (2) The null edge gadget} is shown in {\bf Figure \ref{figure-null-edge}}. This gadget plays the role of an edge of weight $0$, so that the gadget is not necessary but convenient, if we want to assure that all vertices have degree $3$ and all the weights are invertible. The arguments, employed in Section \ref{section-semi-pfaffian}, are more neat when we deal with cubic graphs instead of graphs of degree at most $3$.

\def\separate{3}
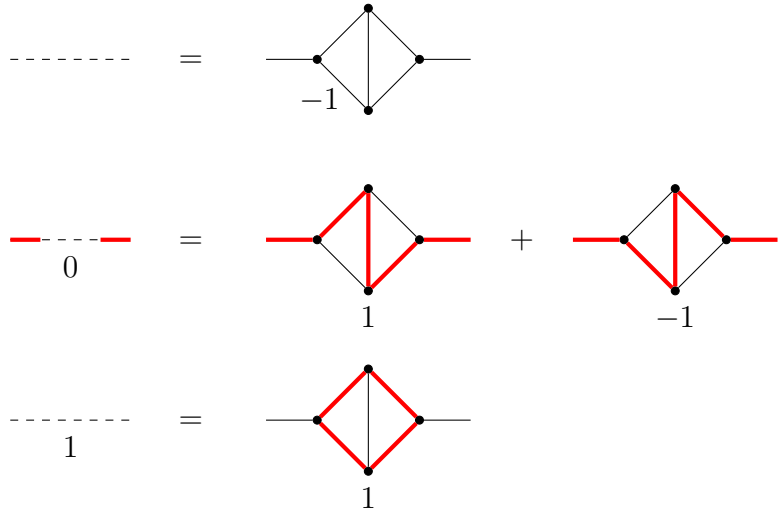
\begin{figure}[H]
\begin{tikzpicture}[scale=0.8,photon/.style={decorate,decoration={snake,post length=1mm}}]
  \tikzstyle{vertex}=[circle,minimum size=\nodesize,inner sep=0pt,draw=black,fill]
  \tikzstyle{vertexempty}=[circle,minimum size=\nodesize*2,inner sep=0pt,draw=black]

  \begin{scope}[shift={(0,0)}]
    \draw[dashed] (0,0) -- ++(2,0);

    \node at (3,-0.035) {$=$};
    \draw (1,0) node [anchor=north] {$ $};

    \begin{scope}[scale=0.85, shift={(5,0)}]
      \node[vertex] (v1) at (1,0) {};
      \node[vertex] (v2) at (2,1) {};
      \node[vertex] (v3) at (2,-1) {};
      \node[vertex] (v4) at (3,0) {};
      \draw (0,0) -- (v1) -- (v2) -- (v3) -- (v4) -- (4,0);
      \draw (v1) -- (v3) (v2) -- (v4);
      \draw (1.65,-0.35) node [anchor=north east] {$-1$};
    \end{scope}
  \end{scope}

  \begin{scope}[shift={(0,-\separate)}]
    \draw[dashed] (0,0) -- ++(2,0);
    \draw[red, ultra thick] (0,0) -- ++(0.5,0);
    \draw[red, ultra thick] (2,0) -- ++(-0.5,0);

    \node at (3,-0.035) {$=$};
    \draw (1,-0.1) node [anchor=north] {$0$};

    \begin{scope}[scale=0.85, shift={(5,0)}]
      \node[vertex] (v1) at (1,0) {};
      \node[vertex] (v2) at (2,1) {};
      \node[vertex] (v3) at (2,-1) {};
      \node[vertex] (v4) at (3,0) {};
      \draw[red, ultra thick] (0,0) -- (v1) -- (v2) -- (v3) -- (v4) -- (4,0);
      \draw (v1) -- (v3) (v2) -- (v4);
      \draw (2,-1.1) node [anchor=north] {$1$};
    \end{scope}
    \node at (8.5,0) {$+$};
    \begin{scope}[scale=0.85, shift={(11,0)}]
      \node[vertex] (v1) at (1,0) {};
      \node[vertex] (v2) at (2,1) {};
      \node[vertex] (v3) at (2,-1) {};
      \node[vertex] (v4) at (3,0) {};
      \draw[red, ultra thick] (0,0) -- (v1) -- (v3) -- (v2) -- (v4) -- (4,0);
      \draw (v1) -- (v2) (v3) -- (v4);
      \draw (2,-1.1) node [anchor=north] {$-1$};
    \end{scope}
  \end{scope}

  \begin{scope}[shift={(0,-2*\separate)}]
    \draw[dashed] (0,0) -- ++(2,0);

    \node at (3,-0.035) {$=$};
    \draw (1,-0.1) node [anchor=north] {$1$};

    \begin{scope}[scale=0.85, shift={(5,0)}]
      \node[vertex] (v1) at (1,0) {};
      \node[vertex] (v2) at (2,1) {};
      \node[vertex] (v3) at (2,-1) {};
      \node[vertex] (v4) at (3,0) {};
      \draw (0,0) -- (v1) -- (v2) -- (v3) -- (v4) -- (4,0);
      \draw (v1) -- (v3) (v2) -- (v4);
      \draw[red, ultra thick] (v1) -- (v2) -- (v4) -- (v3) -- (v1);
      \draw (2,-1.1) node [anchor=north] {$1$};
    \end{scope}
  \end{scope}
\end{tikzpicture}
\caption{The null edge gadget and its signature.}
\label{figure-null-edge}
\end{figure}

{\bf (3) The degree $4$ vertex gadget} is constructed in {\bf Figure \ref{figure-vertex-gadget}}.

\newcommand\ugadgetvert[1]{
  \node[rectangle, draw] (v1) at (0,0) {};
  \node[rectangle, draw] (v2) at (1,0) {};
  \node[rectangle, draw] (v3) at (1,1) {};
  \node[rectangle, draw] (v4) at (0,1) {};

  \draw (v1) to[out=-45,in=-135] (v2);
  \draw (v2) to[out=45,in=-45] (v3);
  \draw (v3) to[out=135,in=45] (v4);
  \draw (v4) to[out=225,in=135] (v1);

  \draw[dashed] (v1) to[out=45,in=-45] (v4);
  \draw[dashed] (v2) to[out=135,in=-135] (v3);

  \draw (v1) -- ++(-0.5,-0.5);
  \draw (v2) -- ++(0.5,-0.5);
  \draw (v3) -- ++(0.5,0.5);
  \draw (v4) -- ++(-0.5,0.5);

  \ifnum#1=1
    \draw (0.5,-0.6) node {$-{1\over 2}$};
    \draw (1.6,0.5) node {$-{1\over 2}$};
    \draw (0.5,1.6) node {$-{1\over 2}$};
    \draw (-0.6,0.5) node {$-{1\over 2}$};
  \fi
}

\newcommand\ugadgetvertcolors[3]{
  \ifcase#1
    \node[rectangle, red, draw, ultra thick] (v1) at (0,0) {};
  \or
    \node[rectangle, red, draw, ultra thick] (v2) at (1,0) {};
  \or
    \node[rectangle, red, draw, ultra thick] (v3) at (1,1) {};
  \or
    \node[rectangle, red, draw, ultra thick] (v4) at (0,1) {};
  \fi
  \ifcase#2
    \draw[red, draw, ultra thick] (v1) to[out=-45,in=-135] (v2);
  \or
    \draw[red, draw, ultra thick] (v2) to[out=45,in=-45] (v3);
  \or
    \draw[red, draw, ultra thick] (v3) to[out=135,in=45] (v4);
  \or
    \draw[red, draw, ultra thick] (v4) to[out=225,in=135] (v1);
  \fi
  \ifcase#3
    \draw[red, draw, ultra thick] (v1) -- ++(-0.5,-0.5);
  \or
    \draw[red, draw, ultra thick] (v2) -- ++(0.5,-0.5);
  \or
    \draw[red, draw, ultra thick] (v3) -- ++(0.5,0.5);
  \or
    \draw[red, draw, ultra thick] (v4) -- ++(-0.5,0.5);
  \fi
}

\begin{figure}[H]
\begin{tikzpicture}[scale=1,photon/.style={decorate,decoration={snake,post length=1mm}}]
  \tikzstyle{vertex}=[circle,minimum size=\nodesize,inner sep=0pt,draw=black,fill]
  \tikzstyle{vertexempty}=[circle,minimum size=\nodesize*2,inner sep=0pt,draw=black]

  \node[circle, draw] (d) at (-5,0.5) {};
  \draw (d) -- ++(-0.5,0.5);
  \draw (d) -- ++(0.5,0.5);
  \draw (d) -- ++(-0.5,-0.5);
  \draw (d) -- ++(0.5,-0.5);

  \node at (-2.5,0.5) {$=$};

  \begin{scope}
    \ugadgetvert{1}
  \end{scope}

\end{tikzpicture}
\caption{The degree $4$ vertex gadget, and its symbolic notation.}
\label{figure-vertex-gadget}
\end{figure}
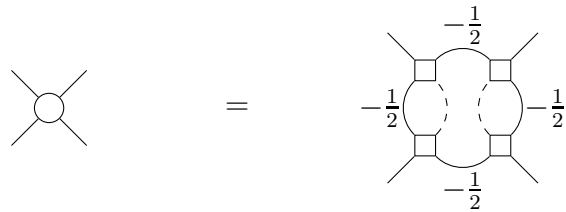

The signatures of the degree $4$ vertex gadget are listed, up to symmetry, in {\bf Figure \ref{figure-vertex-gadget-signature}}.

\def\separate{2.7}
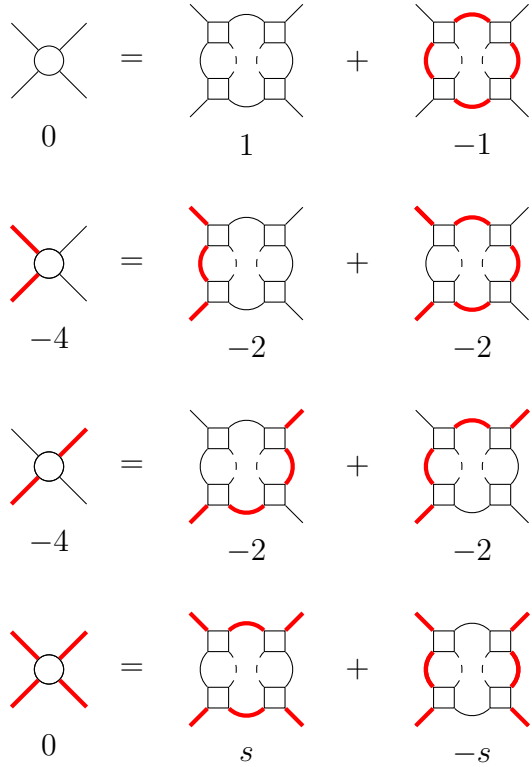
\begin{figure}[H]
\begin{tikzpicture}[scale=1,photon/.style={decorate,decoration={snake,post length=1mm}}]
  \tikzstyle{vertex}=[circle,minimum size=\nodesize,inner sep=0pt,draw=black,fill]
  \tikzstyle{vertexempty}=[circle,minimum size=\nodesize*2,inner sep=0pt,draw=black]

  \begin{scope}
    \node[circle, draw] (d) at (0,0) {};
    \draw (d) -- ++(-0.5,0.5);
    \draw (d) -- ++(0.5,0.5);
    \draw (d) -- ++(-0.5,-0.5);
    \draw (d) -- ++(0.5,-0.5);

    \node at (1.1,0) {$=$};
    \node at (0,-1) {$0$};

    \begin{scope}[scale=0.75, shift={(3,-0.5)}]
      \ugadgetvert{0}
      \node at (0.5,-1) {$1$};
    \end{scope}
    \node at (4.1,0) {$+$};
    \begin{scope}[scale=0.75, shift={(7,-0.5)}]
      \ugadgetvert{0}
      \ugadgetvertcolors{9}{0}{9}
      \ugadgetvertcolors{9}{1}{9}
      \ugadgetvertcolors{9}{2}{9}
      \ugadgetvertcolors{9}{3}{9}
      \node at (0.5,-1) {$-1$};
    \end{scope}
  \end{scope}

  \begin{scope}[shift={(0,-\separate)}]
    \node[circle, draw] (d) at (0,0) {};
    \node[circle, draw] (d) at (0,0) {};
    \draw[red, ultra thick] (d) -- ++(-0.5,0.5);
    \draw (d) -- ++(0.5,0.5);
    \draw[red, ultra thick] (d) -- ++(-0.5,-0.5);
    \draw (d) -- ++(0.5,-0.5);

    \node at (1.1,0) {$=$};
    \node at (0,-1) {$-4$};

    \begin{scope}[scale=0.75, shift={(3,-0.5)}]
      \ugadgetvert{0}
      \ugadgetvertcolors{9}{3}{3}
      \ugadgetvertcolors{9}{9}{0}
      \node at (0.5,-1) {$-2$};
    \end{scope}
    \node at (4.1,0) {$+$};
    \begin{scope}[scale=0.75, shift={(7,-0.5)}]
      \ugadgetvert{0}
      \ugadgetvertcolors{9}{0}{0}
      \ugadgetvertcolors{9}{1}{3}
      \ugadgetvertcolors{9}{2}{3}
      \node at (0.5,-1) {$-2$};
    \end{scope}
  \end{scope}

  \begin{scope}[shift={(0,-2*\separate)}]
    \node[circle, draw] (d) at (0,0) {};
    \node[circle, draw] (d) at (0,0) {};
    \draw (d) -- ++(-0.5,0.5);
    \draw[red, ultra thick] (d) -- ++(0.5,0.5);
    \draw[red, ultra thick] (d) -- ++(-0.5,-0.5);
    \draw (d) -- ++(0.5,-0.5);

    \node at (1.1,0) {$=$};
    \node at (0,-1) {$-4$};

    \begin{scope}[scale=0.75, shift={(3,-0.5)}]
      \ugadgetvert{0}
      \ugadgetvertcolors{9}{0}{0}
      \ugadgetvertcolors{9}{1}{2}
      \node at (0.5,-1) {$-2$};
    \end{scope}
    \node at (4.1,0) {$+$};
    \begin{scope}[scale=0.75, shift={(7,-0.5)}]
      \ugadgetvert{0}
      \ugadgetvertcolors{9}{3}{0}
      \ugadgetvertcolors{9}{2}{2}
      \node at (0.5,-1) {$-2$};
    \end{scope}
  \end{scope}

  \begin{scope}[shift={(0,-3*\separate)}]
    \node[circle, draw] (d) at (0,0) {};
    \node[circle, draw] (d) at (0,0) {};
    \draw[red, ultra thick] (d) -- ++(-0.5,0.5);
    \draw[red, ultra thick] (d) -- ++(0.5,0.5);
    \draw[red, ultra thick] (d) -- ++(-0.5,-0.5);
    \draw[red, ultra thick] (d) -- ++(0.5,-0.5);

    \node at (1.1,0) {$=$};
    \node at (0,-1) {$0$};

    \begin{scope}[scale=0.75, shift={(3,-0.5)}]
      \ugadgetvert{0}
      \ugadgetvertcolors{9}{0}{0}
      \ugadgetvertcolors{9}{2}{1}
      \ugadgetvertcolors{9}{9}{2}
      \ugadgetvertcolors{9}{9}{3}
      \node at (0.5,-1) {$s$};
    \end{scope}
    \node at (4.1,0) {$+$};
    \begin{scope}[scale=0.75, shift={(7,-0.5)}]
      \ugadgetvert{0}
      \ugadgetvertcolors{9}{1}{0}
      \ugadgetvertcolors{9}{3}{1}
      \ugadgetvertcolors{9}{1}{2}
      \ugadgetvertcolors{9}{3}{3}
      \node at (0.5,-1) {$-s$};
    \end{scope}
  \end{scope}

\end{tikzpicture}
\caption{The signatures, up to symmetry, of the degree $4$ vertex gadget.}
\label{figure-vertex-gadget-signature}
\end{figure}

Theorem \ref{theorem-degree-4} and the existence of a planar $3$-regular gadget, whose signature is equal to $-4$ times the signature of a single vertex of degree $4$, implies the following.

\begin{theorem}
\label{theorem-degree-3}
  The undirected determinant of cubic planar graphs, with weights in the set $\{-1,-{1\over 2},1\}$, is \#P-complete.
\end{theorem}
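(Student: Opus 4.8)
The plan is to reduce from Theorem \ref{theorem-degree-4}, which already gives \#P-completeness of the undirected determinant for planar graphs whose vertices have degree $3$ or $4$. It therefore suffices to eliminate the degree-$4$ vertices while preserving the undirected determinant up to an efficiently computable nonzero factor, and while keeping the graph planar, cubic, and with weights in $\{-1,-{1\over 2},1\}$. The mechanism is the signature calculus set up in the preliminaries: the undirected determinant of a graph is the contraction of the signatures of its vertices (viewed as one-vertex gadgets) over the shared edges, so scaling a single signature by a constant $\lambda$ scales the whole determinant by $\lambda$. Hence, if I exhibit a planar cubic gadget whose determinantal signature agrees, up to a fixed scalar $\lambda$, with the determinantal signature of a single degree-$4$ vertex, then substituting this gadget for each of the $k$ degree-$4$ vertices of $A_\phi$ multiplies $\udet A_\phi$ by $\lambda^k$.

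First I would record the signature I am trying to match. A single degree-$4$ vertex $v$ has four external edges, and in any cycle cover exactly two of them are used (since $v$ must have degree $2$); the connecting path is a single non-cyclic component. A short computation with the definition shows the determinantal signature of $v$ is $1$ on each of the six $2$-element subsets of its external edges and $0$ on every other subset: the empty set and the full set would force degree $0$ or $4$ at $v$, and odd-size subsets violate the degree-$2$ parity. In particular $|S|<4$ throughout, so the pairing correction $\tau$ never intervenes at the interface, which is what makes the substitution clean.

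Next I would build the gadget and pin down the scalar. The candidate is the degree-$4$ vertex gadget of Figure \ref{figure-vertex-gadget}: four copies of the auxiliary gadget of Figure \ref{figure-auxiliary-gadget} placed at the corners of a square, joined by null edges, each external connection carrying weight $-{1\over 2}$; the auxiliary gadget and the null edge are themselves realized by the cubic pieces of Figures \ref{figure-auxiliary-gadget} and \ref{figure-null-edge}, so the whole object is genuinely cubic, planar, and uses only weights $-1,-{1\over 2},1$. The verification is a finite case analysis, carried out in Figure \ref{figure-vertex-gadget-signature}, feeding the already-tabulated signatures of the auxiliary gadget (Figure \ref{figure-auxiliary-gadget-signature}) and of the null edge into each interface configuration. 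The outcome is $0$ on the empty set and on the full set (by internal cancellation, $1-1$ and $s-s$ respectively) and $-4$ on each $2$-subset; by the dihedral symmetry of the square and the equality of the four $-{1\over 2}$ weights, checking one adjacent and one diagonal pair suffices to conclude $-4$ on all six $2$-subsets. Thus $\lambda=-4$ and the gadget's determinantal signature is exactly $-4$ times that of a degree-$4$ vertex.

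Finally I would assemble the reduction. Replacing all $k$ degree-$4$ vertices of $A_\phi$ yields a cubic planar graph $\widetilde A_\phi$ with $\udet \widetilde A_\phi=(-4)^k\,\udet A_\phi$, and by Theorem \ref{theorem-degree-4} the right-hand side equals $(-4)^k(-1)^m$ times the number of satisfying assignments of $\phi$. Because $2$ is invertible (equivalently a non-zero-divisor) in the coefficient ring, the factor $(-4)^k$ is nonzero and trivially computed, so any algorithm for the undirected determinant of cubic planar graphs would count satisfying assignments of a $3$CNF formula; together with membership (as in the degree-$4$ case) this gives \#P-completeness. I expect the real difficulty to lie not in this bookkeeping but in the gadget design and its signature verification: producing a cubic replacement whose signature is a \emph{uniform} scalar multiple of the degree-$4$ vertex's, so that adjacent and diagonal $2$-subsets receive the same value and the empty-set and full-set contributions cancel, while simultaneously controlling the sign contributions $(-1)^{|c|}$ from cycles closing up inside the gadget. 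This is exactly where the $-{1\over 2}$ weights and the auxiliary and null-edge pieces are forced on us, and it is also why the construction cannot survive modulo $2$, consistent with Section \ref{section-permanent-3}.
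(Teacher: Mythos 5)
Your proposal is correct and follows essentially the same route as the paper: reduce from Theorem \ref{theorem-degree-4} by substituting for each degree-$4$ vertex the cubic planar gadget of Figure \ref{figure-vertex-gadget} (built from the auxiliary and null-edge gadgets), whose determinantal signature is $-4$ times that of a single degree-$4$ vertex, so the determinant is only rescaled by $(-4)^k$. The only quibble is descriptive: the $-\tfrac12$ weights sit on the four edges joining adjacent auxiliary gadgets rather than on the external edges, but this does not affect the argument.
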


\section{The semi-Pfaffian orientation and the computation of the undirected determinant}
\label{section-semi-pfaffian}

The purpose of this section is twofold. Firstly, we prove Theorem \ref{theorem-undirected-det-computable} which states that the undirected determinant is polynomially computable for a reasonable class of cubic planar graphs, which includes the bipartite graphs. This, together with Theorem \ref{theorem-degree-3}, introduces another instance of a tension between $P$ and \#P.
The second aim of this section is the search for tools and ideas which could guide us in our attempts to find polynomially computable analogues of the determinant, hopefully improving the computational strength of the FKT algorithm. In this direction, Definition \ref{definition-semi-pfaffian} introduces the semi-Pfaffian orientation and Definition \ref{definition-tension} introduces the tension of an even cycle in a planar graph. Both of them play the key role in our proof of the polynomial computability of the graphs mentioned above.

\subsection*{\indent Additional preliminaries.}

Let $G=(V,E)$ be a weighted undirected graph.
An {\em orientation} of an undirected graph $G$ is a choice, independently for every edge of $G$, of a direction from one of its end points to the other, the orientation is not considered a part of the graph structure. An undirected graph $G$ on vertices $V=\{1,2,\ldots,n\}$, equipped with an orientation, is represented by a skew symmetric matrix $A=[a_{ij}]_{i,j=1,2,\ldots,n}$ where, for every edge of weight $e$, oriented from $i$ to $j$, we have $a_{ij}=e$ and $a_{ji}=-e$. We put $a_{ij}=0$ if there is no edge $\{i,j\}$ in $G$.

A cycle $c$ in a graph $G$ is {\em even} if it has even length, it is {\em central} if $G\setminus V(c)$ has a perfect matching. An even cycle $c$ is {\em oddly oriented} if for either choice of direction of traversal around $c$, the number of edges of $c$ directed in the direction of the traversal is odd.

Occasionally, it is convenient to abuse the notation and treat a planar graph as if it was a subset of a plane.

Recall that an orientation of the edges of $G$ is {\em Pfaffian} if every even central cycle of $G$ is oddly oriented. At the heart of the FKT algorithm we see two theorems:
\begin{enumerate}
  \item every planar graph admits a Pfaffian orientation
  \item if $A$ is the skew symmetric adjacency matrix, associated with a graph $G$ with Pfaffian orientation, then, up to sign
      $$
        \pfaff A = \perfmatch G
      $$
\end{enumerate}

The standard definition of Pfaffian, see for example Thomas \cite[Section 2]{thomas}, involves a sign function $\sgn_G:pm(G)\arr\{-1,1\}$, which depends on the orientation of $G$ and is defined as
\begin{equation}\label{equation-sign-of-pm}
  \sgn_G(a)=\sgn\left(
  \begin{array}{ccccccc}
    1 & 2 & 3 & 4 & \ldots & 2n-1 & 2n \\
    i_1 & j_1 & i_2 & j_2 & \ldots & i_n & j_n
  \end{array}
  \right)
\end{equation}
where $\sgn$ is the sign of the indicated permutation, and the edges of the perfect matching
$
  a=\{i_1j_1,i_2j_2,\ldots,i_nj_n\}
$
are listed in such a way that every edge $i_kj_k$ is directed from $i_k$ to $j_k$.

The Pfaffian of $A$ is defined as
\begin{equation}\label{equation-pfaffian-def}
  \pfaff A = \sum_{a\in pm(G)}\sgn_G(a)w(a)
\end{equation}
where the sum is taken over all perfect matchings $pm(G)$. Let us recall that the Pfaffian is polynomially computable. In fact the algorithms that compute the determinant tend to translate to algorithms for the Pfaffian.

\subsection*{\indent The semi-Pfaffian orientation.}

\begin{definition}\label{definition-semi-pfaffian}
  An orientation of a graph $G$ is {\em semi-Pfaffian} if every central cycle in $G$ of length $2k$ is oddly oriented if and only if $k$ is odd.
\end{definition}

\begin{remark}\label{remark-semi-pfaffian-examples}
  Not all planar graphs admit semi-Pfaffian orientation, however those which have at most two faces bounded by an odd number of edges, do. This includes the bipartite graphs.
\end{remark}

\subsection*{\indent Cubic planar graphs.}

From now on, we assume that $G$ is a weighted undirected cubic planar graph, with invertible weights of edges. $G$ is equipped with a semi-Pfaffian orientation, and is represented by the skew symmetric matrix $A$.

In such a graph, the complement $\bar a$, of a perfect matching $a\in pm(G)$, in the set of edges $E(G)$, is a cycle cover. Conversely, the complement $\bar c$ of a cycle cover $c\in cc(G)$ is a perfect matching. Note that if $p$ is the product of all the weights of the edges in $G$ then for every $c\in cc(G)$ we have the following.
\begin{equation}\label{equation-wwp}
  w(c)w(\bar c)=p
\end{equation}

Since we are ultimately interested in undirected determinant, we rewrite the formula \ref{equation-pfaffian-def} for Pfaffian into the language of cycle covers, in the case of cubic graphs we have the following.
\begin{equation}\label{equation-pfaffian-computation}
  \pfaff A = \sum_{c\in cc(G)}\sgn_G(\bar c)w(\bar c)
\end{equation}
The Pfaffian is always polynomial time computable -- no assumptions necessary except that $A$ is skew symmetric. On the other hand, Theorem \ref{theorem-degree-3} implies that the undirected determinant,
\begin{equation}\label{equation-determinant-computation}
  \udet G = \sum_{c\in cc(G)}(-1)^{|c|}w(c)
\end{equation}
is \#P-complete even for cubic planar graphs. The $(-1)^n$ factor disappears since a cubic graph has an even number of vertices.

When comparing equations \ref{equation-pfaffian-computation} and \ref{equation-determinant-computation}, we see that the weights $w(c)$ and $w(\bar c)$ are conveniently related by formula \ref{equation-wwp}. In general, the relation between $\sgn_G(\bar c)$ and $(-1)^{|c|}$ is more complicated, however, for some graphs and their orientations, these are nicely related by Proposition \ref{proposition-f-const}.

Every cycle $c$ in a planar graph $G$ yields a decomposition of the plane $P$ into two closed subsets, the bounded one $P_*$ and the unbounded one $P_\infty$. We have $P=P_*\cup P_\infty$ and $c=P_*\cap P_\infty$. Let $G_*=G\cap P_*$ and $G_\infty=G\cap P_\infty$.
Let $v\in c$ be a vertex. We call $v$ an {\em in-vertex} if the unique edge adjacent to $v$ but not in $c$ belongs to $G_*$. Otherwise, we call $v$ an {\em out-vertex}. If $c$ is even then it is bipartite as a subgraph -- let $V_1$, $V_2$ be the bipartition of its vertices.

\begin{definition}\label{definition-tension}
  {\em Tension} of an even cycle in a cubic planar graph is the absolute value of the difference between the number of the out-vertices in $V_1$ and in $V_2$.
\end{definition}

Note that the tension is independent of whether we use the out-vertices or in-vertices in the definition above.

\begin{definition}
  An undirected cubic planar graph $G$ is {\em without tension} if the tension of every even central cycle in $G$ is null.
\end{definition}

\begin{remark}
The graphs mentioned in Remark \ref{remark-semi-pfaffian-examples} -- those with at most two faces which are bounded by an odd number of edges, are without tension.
\end{remark}

\begin{proposition}\label{proposition-f-const}
  If $G$ is an undirected cubic planar graph without tension, equipped with a semi-Pfaffian orientation then the function
  \begin{align*}
    &f_G:cc(G)\arr\{-1,1\} \\
    &f_G(c)=(-1)^{|c|}\sgn_G(\bar c)
  \end{align*}
  is constant.
\end{proposition}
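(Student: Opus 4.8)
The plan is to show that $f_G$ takes the same value on any two cycle covers by reducing to a single ``local'' move and analysing how each of the two factors $(-1)^{|c|}$ and $\sgn_G(\bar c)$ responds to it. First I would reduce to adjacent cycle covers. Given $c,c'\in cc(G)$, their complementary perfect matchings $\bar c,\bar{c'}$ have symmetric difference $\bar c\triangle\bar{c'}$ equal to a disjoint union of even cycles $C_1\sqcup\cdots\sqcup C_r$; flipping the matching along these cycles one at a time, each intermediate edge set is again a perfect matching and each $C_j$ is, at the moment it is flipped, a central even cycle (the current matching restricts to a perfect matching of $G\setminus V(C_j)$). Since $(E(G)\setminus X)\triangle(E(G)\setminus Y)=X\triangle Y$, the same cycles describe the difference of the corresponding cycle covers. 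Hence it suffices to prove $f_G(c)=f_G(c')$ whenever $c\triangle c'=C$ is a single central even cycle, say of length $2k$.

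Second, the sign factor. By the standard Kasteleyn sign-change lemma underlying the FKT algorithm, flipping a perfect matching along an alternating central cycle $C$ multiplies its Pfaffian sign $\sgn_G$ by $+1$ if $C$ is oddly oriented and by $-1$ otherwise. Because the orientation is semi-Pfaffian (Definition \ref{definition-semi-pfaffian}), $C$ is oddly oriented exactly when $k$ is odd, so in every case $\sgn_G(\bar{c'})=(-1)^{k+1}\sgn_G(\bar c)$.

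Third, and this is the crux, the component factor. I would exploit that $G$ is cubic: every vertex $v$ of $C$ has a unique edge $f_v$ off $C$, and since $C$ is alternating for both $c$ and $c'$ exactly one of the two cycle edges at $v$ lies in $c$, forcing $f_v\in c$; as $c$ and $c'$ agree off $C$ we also get $f_v\in c'$, so the two covers share the same ``external'' structure. Cutting the covers along $C$ turns the components meeting $C$ into the cycles of the union of two perfect matchings on the $2k$ ports of $C$: the boundary matching $M_c$ (resp. $M_{c'}$) of cycle edges used by $c$ (resp. $c'$), together with a single matching $\sigma$ recording how the external paths join the ports. Writing $\nu(\alpha,\beta)$ for the number of cycles in the union of two perfect matchings, the components not meeting $C$ are common to $c$ and $c'$, whence $|c|-|c'|=\nu(\sigma,M_c)-\nu(\sigma,M_{c'})$. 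Because $C$ is a Jordan curve and $G$ is planar, $\sigma$ splits as a non-crossing inside matching of the in-vertices and a non-crossing outside matching of the out-vertices. The task then becomes the purely combinatorial parity identity
\[ \nu(\sigma,M_c)-\nu(\sigma,M_{c'})\equiv k+1\pmod 2 \]
whenever the number of out-vertices in $V_1$ equals the number in $V_2$, i.e. whenever $C$ has null tension (Definition \ref{definition-tension}); this is exactly the hypothesis that $G$ is without tension. Granting it, $(-1)^{|c'|}=(-1)^{k+1}(-1)^{|c|}$, and combining with the sign factor gives $f_G(c')=(-1)^{k+1}(-1)^{|c|}\cdot(-1)^{k+1}\sgn_G(\bar c)=(-1)^{|c|}\sgn_G(\bar c)=f_G(c)$.

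The main obstacle is precisely the displayed parity identity. I expect to prove it by a topological/inductive bookkeeping on the chord diagram: drawing $M_c$ and $M_{c'}$ as the two alternating families of boundary arcs of $C$ and $\sigma$ as disjoint interior and exterior chords, passing from $M_c$ to $M_{c'}$ rotates the boundary arcs by one step, and I would track how this rotation merges or splits the closed curves of $\sigma\cup M_c$, peeling off an innermost chord of $\sigma$ to induct on $k$ while accounting for the in/out labels. The content of the ``without tension'' hypothesis is that the balance of out-vertices between the color classes $V_1$ and $V_2$ is exactly what forces this bookkeeping to the constant parity $k+1$; a nonzero tension shifts the parity (by half the tension), which is what ultimately obstructs polynomial computation for general cubic planar graphs and is consistent with the \#P-completeness in Theorem \ref{theorem-degree-3}.
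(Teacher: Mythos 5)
Your reduction to a single central even cycle $C=c\triangle c'$ is sound (and matches the paper's first step), and your treatment of the sign factor via the standard Kasteleyn flip lemma together with the semi-Pfaffian hypothesis correctly yields $\sgn_G(\bar c')=(-1)^{k+1}\sgn_G(\bar c)$. The chord-diagram reformulation of the component count is also legitimate: in a cubic graph every vertex of $C$ keeps its off-cycle edge in both covers, so $|c|-|c'|=\nu(\sigma,M_c)-\nu(\sigma,M_{c'})$ as you state. But the argument then stops exactly at the point where all the work lies. The displayed parity identity
\[
\nu(\sigma,M_c)-\nu(\sigma,M_{c'})\equiv k+1 \pmod 2
\]
under the null-tension hypothesis is asserted, called ``the main obstacle,'' and accompanied only by a plan (``I expect to prove it by a topological/inductive bookkeeping\ldots''). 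That is a genuine gap, not a routine verification: this identity is precisely where the hypothesis that $G$ is without tension must be used, and it is the counterpart of the entire recursive surgery the paper carries out in Figure \ref{figure-ff-const}. Without a proof of it, the proposition is not established.

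It is worth contrasting the two strategies, because yours is genuinely different in structure even though it starts the same way. The paper does not separate the sign change from the component-count change; instead it repeatedly modifies the ambient graph (into a multigraph $G_1$) by an odd permutation of four consecutive vertices of $t=c\triangle d$, chosen at a pair of adjacent in-vertices or out-vertices whose existence is guaranteed by null tension. Each such step flips $\sgn$ once and flips $(-1)^{|d|}$ once, so the product $f_G(c)f_G(d)$ is invariant, and the cycle shortens by two until the two covers coincide. This couples the two parities locally and never needs a global chord-diagram identity. Your decoupled approach is arguably cleaner conceptually (it isolates where semi-Pfaffianness enters and where tension enters), and your small examples suggest the identity is true, but the induction you sketch -- tracking how rotating the boundary arcs by one step merges and splits the closed curves of $\sigma\cup M_c$ while accounting for the in/out labels and the balance between $V_1$ and $V_2$ -- is exactly the nontrivial content, and it must be written out before the proof can be accepted.
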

\begin{proof}
It is enough to show that for every $c,d\in cc(G)$ we have
\begin{equation}\label{equation-ff}
  f_G(c)f_G(d)=1
\end{equation}

It is a standard observation that, in a cubic graph, any two cycle covers $c$ and $d$ can be connected by a sequence $c_k\in cc(G)$
$$
  c=c_0,c_1,\ldots,c_r=d
$$
such that at each stage $k=0,1,\ldots,r-1$, the union $\bar c_k\cup\bar c_{k+1}$ contains exactly one cycle. The $\bar c_{k+1}$ is obtained from $\bar c_k$ by replacing those edges in this cycle which belong to $\bar c_k$ with the remaining edges of this cycle -- these remaining edges belong to $\bar d$.

Thus from now on we may assume that $\bar c\cup\bar d$ contains only one cycle, or equivalently, that the symmetric difference $c\triangle d$ is a cycle. By assumption, the unique cycle in $\bar c\cup\bar d$ (or, equivalently, $c\triangle d$) above has null tension.

{\bf The idea of the proof} is to construct recursively a series of modifications of the cycle covers $c$, $d$, and of the ambient graph $G$.
We construct sequences $c_i$, $d_i$ and $G_i$, $i=0,1,\ldots,s$, where $G_0=G$ and $c_i,d_i\in cc(G_i)$, $c_0=c$, $d_0=d$.
The graphs $G_i$ for $i>0$ are going to be multigraphs -- a pair of vertices may be connected by two edges. At each stage we will have
\begin{equation}\label{equation-ff-i}
  f_{G_i}(c_i)f_{G_i}(d_i)=1
\end{equation}
The number of edges in $c_i\triangle d_i$ will be the same as in $c_{i+1}\triangle d_{i+1}$. Both will have no loops and at most one cycle of length exceeding $2$. The longest cycle in $c_{i+1}\triangle d_{i+1}$ will be two edges shorter than the longest one in $c_i\triangle d_i$. The $c_{i+1}\triangle d_{i+1}$ will retain the semi-Pfaffian orientation and null tension. At the end we will have $c_s=d_s$, up to choice of an edge, of a multigraph, between the same vertices.

A single modification is done in three steps which are shown in {\bf Figure \ref{figure-ff-const}}. Below we describe such a modification. To simplify the notation we omit the index $i$ and write $c_1$, $d_1$, $G_1$ for $c_{i+1}$, $d_{i+1}$, $G_{i+1}$.

We have seen above that we may assume that $t=c\triangle d$ is a single cycle of even length. After modifications $c\triangle d$ will have a unique component of length greater than $2$. Since, by assumption, the cycle $t$ has null tension, it must contain two adjacent in-vertices or two adjacent out-vertices. We consider the first case, the other being analogous.

The modification is shown in {\bf Figure \ref{figure-ff-const}}. Below we outline the effect of such modification on the objects we are interested in. The modification:

\begin{itemize}
  \item[(1)] Changes one of the cycle covers, either $c$ or $d$. We obtain $c_1$ and $d_1$ where either $c_1=c$ or $d_1=d$.
  \item[(2)] Shortens the cycle $t$ by two edges. The new cycle $t_1$, a connected component of $c_1\triangle d_1$, retains semi-Pfaffian orientation and null tension.
  \item[(3)] Transforms the ambient graph $G$ into $G_1$ so that the two adjacent in-vertices, denoted $v_2$ and $v_3$ in {\bf Figure \ref{figure-ff-const}}, are connected by two paralel edges in $G_1$.
  \item[(4)] Leaves the product \ref{equation-ff} unchanged, that is $f_G(c)f_G(d)=f_{G_1}(c_1)f_{G_1}(d_1)$.
\end{itemize}

\def\s{1.5.pt}
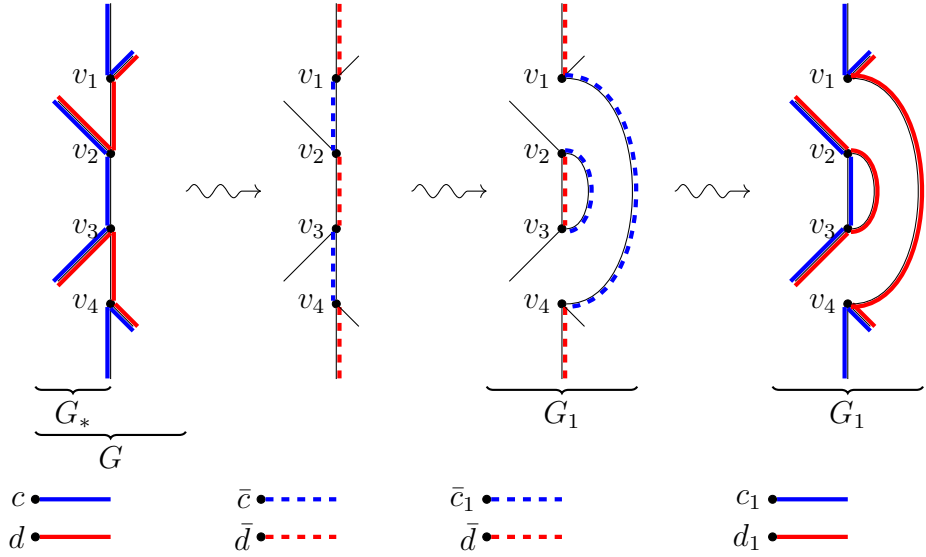
\begin{figure}[H]
\begin{tikzpicture}[scale=1,photon/.style={decorate,decoration={snake,post length=1mm}}]
  \tikzstyle{vertex}=[circle,minimum size=\nodesize,inner sep=0pt,draw=black,fill]
  \tikzstyle{vertexempty}=[circle,minimum size=\nodesize*2,inner sep=0pt,draw=black]

  \begin{scope}[scale=1, shift={(0,0)}]
      \node[vertex] (v1) at (0,4) {};
      \node[vertex] (v2) at (0,3) {};
      \node[vertex] (v3) at (0,2) {};
      \node[vertex] (v4) at (0,1) {};
      \node[vertex] (a8) at (-1,-1.6) {};
      \node[vertex] (a9) at (-1,-2.1) {};
      \draw (a8) node [anchor=east] {$c$};
      \draw[blue, ultra thick] (a8) -- ++ (1,0);
      \draw (a9) node [anchor=east] {$d$};
      \draw[red, ultra thick] (a9) -- ++ (1,0);
      \draw [thick,decoration={brace,mirror,raise=0.1cm},decorate]
        (-1,0) -- (0,0)
        node [pos=0.5,anchor=north,yshift=-0.15cm] {$G_*$};
      \draw [thick,decoration={brace,mirror,raise=0.7cm},decorate]
        (-1,0) -- (1,0)
        node [pos=0.5,anchor=north,yshift=-0.75cm] {$G$};
      \draw (v1) -- (v2) -- (v3) -- (v4) -- ++(0,-1);
      \draw (v1) -- ++(0,1);
      \draw (v2) -- ++(-0.7,0.7);
      \draw (v3) -- ++(-0.7,-0.7);
      \draw[red, ultra thick]
        (v1.south east) -- (v2.north east);
      \draw[red, ultra thick]
        (v2.north) -- ++(-0.7,0.7);
      \draw[red, ultra thick]
        (v3.south) -- ++(-0.7,-0.7);
      \draw[red, ultra thick]
        (v3.south east) -- (v4.north east);

      \draw[blue, ultra thick]
        (v1.north west) -- ++(0,0.95);
      \draw[blue, ultra thick]
        (v2.south west) -- (v3.north west);
      \draw[blue, ultra thick]
        (v4.south west) -- ++(0,-0.95);
      \draw[blue, ultra thick]
        (v2.west) -- ++(-0.7,0.7);
      \draw[blue, ultra thick]
        (v3.west) -- ++(-0.7,-0.7);

      \draw[blue, ultra thick]
        (v1.north) -- ++(0.3,0.3);
      \draw[red, ultra thick]
        (v1.east) -- ++(0.3,0.3);
      \draw
        (v1) -- ++(0.3,0.3);

      \draw[blue, ultra thick]
        (v4.south) -- ++(0.3,-0.3);
      \draw[red, ultra thick]
        (v4.east) -- ++(0.3,-0.3);
      \draw
        (v4) -- ++(0.3,-0.3);

      \draw (v1) node [anchor=east] {$v_1$};
      \draw (v2) node [anchor= east] {$v_2$};
      \draw (v3) node [anchor= east] {$v_3$};
      \draw (v4) node [anchor=east] {$v_4$};
  \end{scope}

  \draw[->,photon] (1,2.5) -- ++(1,0);

  \begin{scope}[scale=1, shift={(3,0)}]
      \node[vertex] (v1) at (0,4) {};
      \node[vertex] (v2) at (0,3) {};
      \node[vertex] (v3) at (0,2) {};
      \node[vertex] (v4) at (0,1) {};
      \node[vertex] (a8) at (-1,-1.6) {};
      \node[vertex] (a9) at (-1,-2.1) {};
      \draw (a8) node [anchor=east] {$\bar c$};
      \draw[dashed, blue, ultra thick] (a8) -- ++ (1,0);
      \draw (a9) node [anchor=east] {$\bar d$};
      \draw[dashed, red, ultra thick] (a9) -- ++ (1,0);

      \draw (v1) -- (v2) -- (v3) -- (v4) -- ++(0,-1);
      \draw (v1) -- ++(0,1);
      \draw (v2) -- ++(-0.7,0.7);
      \draw (v3) -- ++(-0.7,-0.7);
      \draw[dashed, blue, ultra thick]
        (v1.south west) -- (v2.north west);
      \draw[dashed, blue, ultra thick]
        (v3.south west) -- (v4.north west);

      \draw[dashed, red, ultra thick]
        (v1.north east) -- ++(0,0.95);
      \draw[dashed, red, ultra thick]
        (v2.south east) -- (v3.north east);
      \draw[dashed, red, ultra thick]
        (v4.south east) -- ++(0,-0.95);

      \draw
        (v1) -- ++(0.3,0.3);
      \draw
        (v4) -- ++(0.3,-0.3);


      \draw (v1) node [anchor=east] {$v_1$};
      \draw (v2) node [anchor= east] {$v_2$};
      \draw (v3) node [anchor= east] {$v_3$};
      \draw (v4) node [anchor=east] {$v_4$};

  \end{scope}

  \draw[->,photon] (4,2.5) -- ++(1,0);

  \begin{scope}[scale=1, shift={(6,0)}]
      \node[vertex] (v1) at (0,4) {};
      \node[vertex] (v2) at (0,3) {};
      \node[vertex] (v3) at (0,2) {};
      \node[vertex] (v4) at (0,1) {};
      \node[vertex] (a8) at (-1,-1.6) {};
      \node[vertex] (a9) at (-1,-2.1) {};
      \draw (a8) node [anchor=east] {$\bar c_1$};
      \draw[dashed, blue, ultra thick] (a8) -- ++ (1,0);
      \draw (a9) node [anchor=east] {$\bar d$};
      \draw[dashed, red, ultra thick] (a9) -- ++ (1,0);
      \draw [thick,decoration={brace,mirror,raise=0.1cm},decorate]
        (-1,0) -- (1,0)
        node [pos=0.5,anchor=north,yshift=-0.15cm] {$G_1$};

      \draw (v2) -- (v3) (v4) -- ++(0,-1);
      \draw (v1) -- ++(0,1);
      \draw (v2) -- ++(-0.7,0.7);
      \draw (v3) -- ++(-0.7,-0.7);

      \draw[dashed, red, ultra thick]
        (v1.north east) -- ++(0,0.95);
      \draw[dashed, red, ultra thick]
        (v2.south east) -- (v3.north east);
      \draw[dashed, red, ultra thick]
        (v4.south east) -- ++(0,-0.95);

      \draw
        (v1) -- ++(0.3,0.3);
      \draw
        (v4) -- ++(0.3,-0.3);

      \draw (v1) to[out=0,in=0] (v4);
      \draw (v2) to[out=0,in=0] (v3);
      \draw[dashed, blue, ultra thick]
        (v1.north east) to[out=0,in=0,distance=1.26cm] (v4.south east);
      \draw[dashed, blue, ultra thick]
        (v2.north east) to[out=0,in=0, distance=0.47cm] (v3.south east);

      \draw (v1) node [anchor=east] {$v_1$};
      \draw (v2) node [anchor= east] {$v_2$};
      \draw (v3) node [anchor= east] {$v_3$};
      \draw (v4) node [anchor=east] {$v_4$};
  \end{scope}

  \draw[->,photon] (7.5,2.5) -- ++(1,0);

  \begin{scope}[scale=1, shift={(9.8,0)}]
      \node[vertex] (v1) at (0,4) {};
      \node[vertex] (v2) at (0,3) {};
      \node[vertex] (v3) at (0,2) {};
      \node[vertex] (v4) at (0,1) {};
      \node[vertex] (a8) at (-1,-1.6) {};
      \node[vertex] (a9) at (-1,-2.1) {};
      \draw (a8) node [anchor=east] {$c_1$};
      \draw[blue, ultra thick] (a8) -- ++ (1,0);
      \draw (a9) node [anchor=east] {$d_1$};
      \draw[red, ultra thick] (a9) -- ++ (1,0);
      \draw [thick,decoration={brace,mirror,raise=0.1cm},decorate]
        (-1,0) -- (1,0)
        node [pos=0.5,anchor=north,yshift=-0.15cm] {$G_1$};

      \draw (v2) -- (v3) (v4) -- ++(0,-1);
      \draw (v1) -- ++(0,1);
      \draw (v2) -- ++(-0.7,0.7);
      \draw (v3) -- ++(-0.7,-0.7);
      \draw[red, ultra thick]
        (v2.north) -- ++(-0.7,0.7);
      \draw[red, ultra thick]
        (v3.south) -- ++(-0.7,-0.7);

      \draw[blue, ultra thick]
        (v1.north west) -- ++(0,0.95);
      \draw[blue, ultra thick]
        (v2.south east) -- (v3.north east);
      \draw[blue, ultra thick]
        (v4.south west) -- ++(0,-0.95);
      \draw[blue, ultra thick]
        (v2.west) -- ++(-0.7,0.7);
      \draw[blue, ultra thick]
        (v3.west) -- ++(-0.7,-0.7);

      \draw[blue, ultra thick]
        (v1.north) -- ++(0.3,0.3);
      \draw[red, ultra thick]
        (v1.east) -- ++(0.3,0.3);
      \draw
        (v1) -- ++(0.3,0.3);

      \draw[blue, ultra thick]
        (v4.south) -- ++(0.3,-0.3);
      \draw[red, ultra thick]
        (v4.east) -- ++(0.3,-0.3);
      \draw
        (v4) -- ++(0.3,-0.3);

      \draw (v1) to[out=0,in=0] (v4);
      \draw (v2) to[out=0,in=0] (v3);
      \draw[red, ultra thick]
        (v1.north east) to[out=0,in=0,distance=1.26cm] (v4.south east);
      \draw[red, ultra thick]
        (v2.north east) to[out=0,in=0, distance=0.47cm] (v3.south east);

      \draw (v1) node [anchor=east] {$v_1$};
      \draw (v2) node [anchor= east] {$v_2$};
      \draw (v3) node [anchor= east] {$v_3$};
      \draw (v4) node [anchor=east] {$v_4$};

  \end{scope}

\end{tikzpicture}
\caption{The modification of $G$ that shortens $t$ and preserves the product $f_G(c)f_G(d)=f_{G_1}(c_1)f_{G_1}(d_1)$}
\label{figure-ff-const}
\end{figure}

In the left drawing we see the adjacent in-vertices $v_2$ and $v_3$ with the surrounding fragments of the cycle covers $c$ and $d$. Possibly swapping the names $c$ and $d$, we may assume that the edge $\{v_2,v_3\}$ belongs to $c$, as it is shown in {\bf Figure \ref{figure-ff-const}}.
The vertices $v_1$ and $v_4$ may be, independently, either the in-vertices or the out-vertices.

In the second drawing we mark the complements $\bar c$ and $\bar d$.

In the third drawing we apply a permutation $\sigma$ to the vertices $\{v_1,v_2,v_3,v_4\}$. The permutation moves the edges $\{v_1,v_2\}$ and $\{v_3,v_4\}$, together with their orientations, and is subject to the following conditions:
\begin{itemize}
  \item[(1)] $\sigma(\{v_1,v_2\})=\{v_2,v_3\}$ so that the old edge $\{v_2,v_3\}$ and the new edge $\sigma(\{v_1,v_2\})$ have the same orientation.
  \item[(2)] $\sigma(\{v_3,v_4\})=\{v_1,v_4\}$ so that the induced orientation of $\{v_1,v_4\}$ agrees with the direction of traversal around $t$ which is induced by the orientations of the even number of the original edges $\{v_1,v_2\}$, $\{v_2,v_3\}$ and $\{v_3,v_4\}$.
\end{itemize}
This modification removes two vertices $v_2$ and $v_3$ from the cycle $t$ which reduces its length by $2$ -- from some $2k$ to $2(k-1)$. The condition (2) implies that the parity of the number of edges directed clockwise (as well as those counterclockwise) changes, therefore the modified cycle $t_1$ retains the semi-Pfaffian orientation.
{\bf Figure \ref{figure-sigma}} lists, up to symmetry, all the possible original orientations of $\{v_1,v_2\}$, $\{v_2,v_3\}$ and $\{v_3,v_4\}$, and the corresponding permutations $\sigma$:

\begin{figure}[H]
\begin{tikzpicture}[scale=1,photon/.style={decorate,decoration={snake,post length=1mm}}]
  \tikzstyle{vertex}=[circle,minimum size=\nodesize,inner sep=0pt,draw=black,fill]
  \tikzstyle{vertexempty}=[circle,minimum size=\nodesize*2,inner sep=0pt,draw=black]

  \begin{scope}[shift={(0,0)}]
    \foreach \x in {1,2,3,4}{
      \node[vertex] (v\x) at (\x,0) {};
      \draw (v\x) node [anchor=north] {$v_\x$};
    }
    \draw[->] (v1) -- (v2);
    \draw[->] (v2) -- (v3);
    \draw[->] (v3) -- (v4);
    \draw[dashed,->] (v4) to[out=90, in=90] (v1);
    \draw[dashed,->] (v2) to[out=90, in=90] (v3);
    \draw (2.5,-1) node {$\sigma = (v_1,v_2,v_3,v_4)$};
  \end{scope}

  \begin{scope}[shift={(4.5,0)}]
    \foreach \x in {1,2,3,4}{
      \node[vertex] (v\x) at (\x,0) {};
      \draw (v\x) node [anchor=north] {$v_\x$};
    }
    \draw[->] (v1) -- (v2);
    \draw[->] (v3) -- (v2);
    \draw[->] (v3) -- (v4);
    \draw[dashed,->] (v1) to[out=90, in=90] (v4);
    \draw[dashed,->] (v3) to[out=90, in=90] (v2);
    \draw (2.5,-1) node {$\sigma = (v_1,v_3)$};
  \end{scope}

  \begin{scope}[shift={(9,0)}]
    \foreach \x in {1,2,3,4}{
      \node[vertex] (v\x) at (\x,0) {};
      \draw (v\x) node [anchor=north] {$v_\x$};
    }
    \draw[->] (v1) -- (v2);
    \draw[->] (v2) -- (v3);
    \draw[->] (v4) -- (v3);
    \draw[dashed,->] (v1) to[out=90, in=90] (v4);
    \draw[dashed,->] (v2) to[out=90, in=90] (v3);
    \draw (2.5,-1) node {$\sigma = (v_1,v_2,v_3,v_4)$};
  \end{scope}

\end{tikzpicture}
\caption{Possible orientations, up to symmetry, and the corresponding permutations $\sigma$.} \label{figure-sigma}
\end{figure}
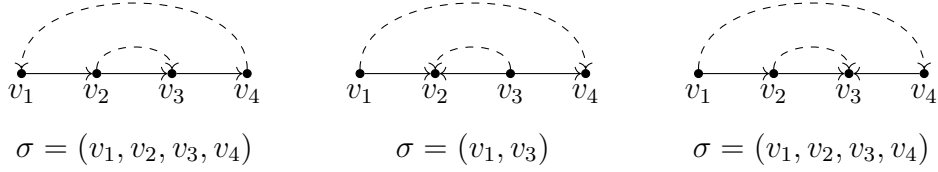

Since in every case shown in {\bf Figure \ref{figure-sigma}}, the permutation $\sigma$ is odd, it changes the sign (see \ref{equation-sign-of-pm}) of the perfect matching $\bar c$, viewed as a summand of the Pfaffian, in \ref{equation-pfaffian-def}. This way we obtain
\begin{equation}\label{equation-const-sgn}
  \sgn_{G_1}(\bar c_1)=-\sgn_G(\bar c)
\end{equation}

In the right drawing of {\bf Figure \ref{figure-ff-const}} we see the cycle covers $c_1$ and $d_1$ -- the complements of $\bar c_1$ and $\bar d$ in the new graph $G_1$. We see that $c_1=c$, however, $d_1$ is different from $d$.

The modification shown in {\bf Figure \ref{figure-ff-const}} changes parity of the number of cycles in the cycle cover $d$ so that
\begin{equation}\label{equation-const-parity}
  (-1)^{|d_1|}=-(-1)^{|d|}
\end{equation}

Combining \ref{equation-const-sgn} and \ref{equation-const-parity} we obtain
\begin{align}\label{equation-ffG1}
\begin{split}
  f_{G_1}(c_1)f_{G_1}(d_1)
  &=(-1)^{|c_1|}\sgn_{G_1}(\bar c_1)(-1)^{|d_1|}\sgn_{G_1}(\bar d_1) \\
  &=(-1)^{|c|}(-\sgn_G(\bar c))(-(-1)^{|d|})\sgn_G(\bar d) \\
  &=f_{G}(c)f_{G}(d)
\end{split}
\end{align}
thus the modification doesn't change the product.

We have deleted two in-vertices from $t$, one from either part of the bipartition of $t$, so that the modified cycle $t_1$ retains the null tension.

We repeat the operation shown in {\bf Figure \ref{figure-ff-const}}, at every step the length of $t$ is reduced by $2$, until at some step $s$ its length is $2$.
At that point we see that every edge in $\bar c_s$ corresponds to an edge in $\bar d_s$, which has the same end points. This correspondence preserves the orientations. Therefore
$\sgn_{G_s}(\bar c_s)=\sgn_{G_s}(\bar d_s)$ and similarly $|c_s|=|d_s|$, so that we obtain
\begin{align}\label{label-ffGr}
\begin{split}
  f_G(c)f_G(d)
  &=\ldots=f_{G_s}(c_s)f_{G_s}(d_s) \\
  &=(-1)^{|c_s|}\sgn_{G_s}(\bar c_s)(-1)^{|d_s|}\sgn_{G_s}(\bar d_s) \\
  &=1
\end{split}
\end{align}
where the dots indicate a sequence of equations given by \ref{equation-ffG1}.
This completes the proof.
\end{proof}

As a corollary, we obtain the following.

\begin{thm}\label{theorem-undirected-det-computable}
  If $G$ is a weighted undirected cubic planar graph with a semi-Pfaffian orientation and without tension, and all the weights in $G$ are nonzero then, up to sign,
  $$
    \udet G = p\cdot\pfaff(A_{inv})
  $$
  where $p$ is the product of the weights of all the edges in $G$ and $A_{inv}$ is the matrix whose nonzero entries are the inverses of the nonzero entries of the skew symmetric adjacency matrix of $G$ with the orientation.
\end{thm}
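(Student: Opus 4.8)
The plan is to read this theorem as a straightforward bookkeeping consequence of Proposition \ref{proposition-f-const}, into which all the genuine combinatorial work has already been funneled. First I would record that $A_{inv}$ is again skew symmetric: inverting a conjugate pair $a_{ij}=e$, $a_{ji}=-e$ yields $1/e$ and $-1/e$, so the skew symmetry (and hence the polynomial computability of $\pfaff(A_{inv})$) is preserved, and the sign function $\sgn_G$ attached to the fixed orientation is exactly the one occurring in equations \ref{equation-sign-of-pm}--\ref{equation-pfaffian-def} and in Proposition \ref{proposition-f-const}. The nonzero-weights hypothesis is used only to guarantee that $A_{inv}$ is defined.

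Next I would expand the Pfaffian of $A_{inv}$ over perfect matchings and reindex it by cycle covers. By \ref{equation-pfaffian-def},
$$
\pfaff(A_{inv})=\sum_{a\in pm(G)}\sgn_G(a)\,w_{inv}(a),\qquad w_{inv}(a)=\prod_{e\in a}w(e)^{-1}.
$$
Since $G$ is cubic, complementation $a\mapsto\bar a$ is a bijection between $pm(G)$ and $cc(G)$, so I substitute $a=\bar c$ with $c$ ranging over $cc(G)$. Then I use the multiplicativity relation \ref{equation-wwp}, namely $w(c)w(\bar c)=p$, to rewrite $w_{inv}(\bar c)=w(\bar c)^{-1}=w(c)/p$, which turns the sum into $p^{-1}\sum_{c\in cc(G)}\sgn_G(\bar c)\,w(c)$.

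The final step is to invoke Proposition \ref{proposition-f-const}: under the stated hypotheses (cubic, planar, semi-Pfaffian orientation, without tension) the function $f_G(c)=(-1)^{|c|}\sgn_G(\bar c)$ is a constant $\varepsilon\in\{-1,1\}$, so $\sgn_G(\bar c)=\varepsilon\,(-1)^{|c|}$ for every $c\in cc(G)$. Substituting this identity and comparing with \ref{equation-determinant-computation} gives
$$
\pfaff(A_{inv})=\frac{\varepsilon}{p}\sum_{c\in cc(G)}(-1)^{|c|}w(c)=\frac{\varepsilon}{p}\,\udet G,
$$
whence $\udet G=\varepsilon\,p\cdot\pfaff(A_{inv})$, which is precisely the asserted identity up to the global sign $\varepsilon$. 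I expect no real obstacle in this argument: every hypothesis of the theorem is consumed either in forming $A_{inv}$ or in licensing Proposition \ref{proposition-f-const}, and the remaining manipulation is purely formal. The only point deserving mild care is that $\varepsilon$ is genuinely independent of $c$ — this is the entire content of Proposition \ref{proposition-f-const}, and so the true difficulty lies there rather than in the corollary — together with the fact that a single fixed orientation is used throughout, so that $\sgn_G$ is unambiguous on both sides of the equation.
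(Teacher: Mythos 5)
Your proposal is correct and follows exactly the paper's intended route: the paper's own proof is a one-line appeal to Proposition \ref{proposition-f-const} together with identities \ref{equation-wwp}, \ref{equation-pfaffian-computation} and \ref{equation-determinant-computation}, and your argument simply writes out that chain of substitutions in full. The only addition is your explicit check that $A_{inv}$ remains skew symmetric and that $\sgn_G$ depends only on the orientation, which the paper leaves implicit but which is a worthwhile clarification.
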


\begin{proof}
  This is an immediate consequence of Proposition \ref{proposition-f-const} and the identities
  \ref{equation-wwp},
  \ref{equation-pfaffian-computation} and
  \ref{equation-determinant-computation}.
\end{proof}

\end{document}